\DeclareSymbolFont{bbold}{U}{bbold}{m}{n}
\DeclareSymbolFontAlphabet{\mathbbold}{bbold}
\newtheorem{Theorem}{Theorem}[section]
\newtheorem{Lemma}[Theorem]{Lemma}
\newtheorem{Corollary}[Theorem]{Corollary}
\newtheorem{Fact}[Theorem]{Fact}
\theoremstyle{remark}
\newtheorem{Remark}[Theorem]{Remark}
\newtheorem{Example}[Theorem]{Example}
\newtheorem{Definition}[Theorem]{Definition}
\newcommand{\CC}{{\mathbb C}}
\newcommand{\NN}{{\mathbb N}}
\newcommand{\RR}{{\mathbb R}}
\newcommand{\TT}{{\mathbb T}}
\newcommand{\ZZ}{{\mathbb Z}}
\newcommand{\QQ}{{\mathbb Q}}
\renewcommand{\SS}{{\mathbb S}}
\newcommand{\calU}{{\mathcal U}}
\newcommand{\calA}{{\mathcal A}}
\newcommand{\calE}{{\mathcal E}}
\newcommand{\calV}{{\mathcal V}}
\newcommand{\Span}{{\text{Div}}}
\newcommand{\calN}{{\mathcal N}}
\newcommand{\newt}[1]{\calN(#1)}
\newcommand{\defcolor}[1]{{\color{blue}#1}}
\newcommand{\demph}[1]{\defcolor{{\sl #1}}}
\definecolor{TAMU}{RGB}{140,0,0}
\definecolor{myblue}{RGB}{0,0,198}
\definecolor{myred}{RGB}{182,0,0}
\newcommand{\bfz}{{\bf  0}}
\title[Irreducibility of the Dispersion Polynomial for Periodic Graphs]{Irreducibility of the Dispersion Polynomial \\ for Periodic Graphs}
\author{M.~Faust}
\address{Matthew Faust, Department of Mathematics,
         Texas A\&M University, College Station, Texas 77843,  USA}
\email{mfaust@tamu.edu}
\urladdr{https://mattfaust.github.io/}
\author{J.~Lopez Garcia}
\address{Jordy Lopez Garcia, Department of Mathematics,
         Texas A\&M University, College Station, Texas 77843,  USA}
\email{jordy.lopez@tamu.edu}
\urladdr{https://jordylopez27.github.io/}
\thanks{Research supported in part by NSF DMS-2201005, DMS-2000345, DMS-2052572, and DMS-2246031.}
\subjclass[2010]{14M25,47A75,52B20.}
\keywords{Bloch Variety, Fermi Variety, Dispersion Polynomial, Irreducibility}
\begin{document}
                               
\begin{abstract}
 We use methods from algebra and discrete geometry to study the irreducibility of the dispersion polynomial of a discrete periodic operator associated to a periodic graph after changing the period lattice. We provide numerous applications of these results to discrete periodic operators associated to families of graphs which include dense periodic graphs, and a family containing the hexagonal and diamond lattices. 
\end{abstract}

\maketitle 

\section*{Introduction}

Let $\Gamma$ be a $\ZZ^{d}$-periodic graph, and let \demph{$\ell^2(\Gamma)$} be the Hilbert space of square-summable functions on the vertices of $\Gamma$. A \demph{discrete periodic operator} $L$--a weighted graph Laplacian with a periodic potential--acting on $\ell^{2}(\Gamma)$ is self-adjoint and therefore has real spectrum $\sigma(L)$. After applying the Floquet transform to $\ell^{2}(\Gamma)$, the operator may be represented by a matrix of \demph{Laurent polynomial} entries. The characteristic polynomial of this matrix is called the \demph{dispersion polynomial} (or the Bloch polynomial), and its zero-set is an algebraic hypersurface in $(\SS^{1})^{d}{\times}\RR$, where $\SS^{1}$ is the complex unit circle. This variety is known as the \demph{Bloch} \demph{variety}. We may recover the spectrum of this operator by projecting the Bloch variety onto $\RR$. The level set of the Bloch variety at the eigenvalue $\lambda_0 
$ $\in \sigma(L)$ is the \demph{Fermi} \demph{variety} at energy level $\lambda_0$.

Studying irreducibility of these varieties may seem esoteric, but irreducibility has important consequences. In~\cite{KV, Kuchment1998, KV22} it was shown that irreducibility of the Fermi varieties implies the absence of embedded eigenvalues (see~\cite{Overview, Embed,Liu2022} for related applications). Moreover, it was proven in~\cite{LiuQE} that irreducibility of the Bloch variety implies quantum ergodicity. In~\cite{Battig, FLM, LiShip, GKT, Liu2022}, these varieties were shown to be irreducible for operators defined on a class of finite-range graphs, such as the square lattice and certain planar periodic graphs. Reducibility of the Bloch variety has also been studied in~\cite{LeeLiShip ,Shipman2, Shipman,sabri2023flat}. For a more detailed history, see~\cite{Overview,kuchment2023analytic, TopFermiLiu}.

In this work we study the (ir)reducibility of the Bloch and Fermi varieties upon a change of the period lattice. This line of work dates back to the 1980s, with a focus on the discrete periodic Schr\"odinger operator. For this operator, irreducibility of the Bloch variety was proven in~\cite{BattigThesis} for $d=2$. Moreover, it was shown in ~\cite{GKT} that for $d=2$ all but finitely many of the Fermi varieties are irreducible, and in ~\cite{Battig} it was shown that every Fermi variety is irreducible for $d=3$. In~\cite{Liu2022}, this result was extended to higher dimensions. Most recently, irreducibility of Bloch varieties for a large class of graphs, which includes the triangular lattice and Harper lattice, was proven in~\cite{FLM}. In ~\cite{fillman2023algebraic}, irreducibility of all but finitely many Fermi varieties for a large class of graphs was demonstrated, including the Lieb lattice.

As irreducibility of the dispersion polynomial implies irreducibility of the Bloch variety, we examine the effect that changing the period lattice has on the irreducibility of this polynomial. We apply our observations to discuss the irreducibility of the Bloch varieties associated to various families of periodic graphs, including the diamond lattice (Example~\ref{GrapheneBloch}), and dense periodic graphs (these are defined before Example~\ref{ex:DenseBloch} and were first introduced in~\cite{FS}). We also discuss the irreducibility of Fermi varieties associated to dense periodic graphs (Example~\ref{DFermi}).

Fixing a $\ZZ^d$-periodic potential, we study the reducibility of the dispersion polynomial $D(z,\lambda)$ after replacing the period lattice with the sublattice $Q \ZZ =  \bigoplus_{i=1}^{d}q_{i}\ZZ$, where $Q \in \NN^d$. We show the irreducibility of the corresponding dispersion polynomial $D_{Q}(z,\lambda)$ depends on the irreducibility of $D(z,\lambda)$ and a relationship between the support of $D(z,\lambda)$ and the tuple $Q$ (Theorem~\ref{TH:1} and Lemma~\ref{lem:coprime}).

We use discrete geometry to study when irreducibility is preserved for a potential that is periodic with respect to the sublattice $Q\ZZ$. Roughly, for a $Q\ZZ$-periodic potential, we show that if enough of the facial polynomials of $D_{Q}(z,\lambda)$ are also facial polynomials of a $\ZZ^d$-periodic potential and the corresponding facial polynomials of $D(z,\lambda)$ are irreducible, then the polynomial factors ``only homothetically'' (Corollary~\ref{cor:ZdPeriodic}). If this condition is met, then $D_{Q}(z,\lambda)$ is irreducible if it has an irreducible facial polynomial (Corollary~\ref{cor:irred}). The theory of these \demph{only homothetically reducible} polynomials, which maybe of independent interest, is explored in Section~\ref{sec:3-Bragg}, and applied in Section~\ref{sec:4-applications} to study the irreducibility of $D_Q(z,\lambda)$ associated to various periodic graphs for $Q\ZZ$-periodic potentials.

There is overlap between our methods and those in ~\cite{FLM}, which were inspired by the work of ~\cite{Liu2022}. In particular, in all of these works an interplay between the structure of the dispersion polynomial and the irreducibility of their facial polynomials is exploited in order to discuss the irreducibility of the dispersion polynomial. We view our main contribution as translating the heart of these arguments to the language of discrete geometry. This allows us to utilize the theory of indecomposability for integral polytopes to the study of irreducibility of the Bloch and Fermi varieties. This new lens enables us to apply our results to study the irreducibility of a larger class of dispersion polynomials; for example, those that arise from many-vertex models like the hexagonal lattice, as opposed to the single-vertex models (see 1-vertex graphs in Section~\ref{sec:4-applications}), such as the square lattice, that are the subjects of~\cite{FLM}. Remarks~\ref{remark:past} and \ref{rm:compare} will further compare and contrast our results with those of~\cite{FLM}.

Section~\ref{sec:1-prelims} introduces the necessary background on discrete periodic operators, algebra, and discrete geometry.  Section~\ref{sec:3-Bragg} uses classical results on the decomposability of polytopes to obtain analogous results for a class of Laurent polynomials.  For a $\ZZ^d$-periodic potential, Section~\ref{sec:2-structure} discusses sufficient conditions for when the irreducibility of the dispersion polynomial is preserved after changing the period lattice, which when combined with Section~\ref{sec:3-Bragg}, will enable us to discuss irreducibility of the dispersion polynomial for more general potentials. Section~\ref{sec:4-applications} provides various applications of these results.

\section{Preliminaries}\label{sec:1-prelims}

\subsection{Periodic Graphs and Discrete Periodic Operators}\label{subsec:periodicg}
For more, see \cite{BerkKuch,KuchBook,TopCry}. 
 Let $G$ be a free abelian group. A locally finite simple (undirected, and no multiple edges or loops) graph \demph{$\Gamma$} $\vcentcolon= (\calV(\Gamma),\calE(\Gamma))$ is \demph{$G$-periodic} if it is equipped with a (free) co-finite action of $G$, called the (\demph{abstract}) \demph{period lattice}. A \demph{fundamental domain} is a set \demph{$W$} of representatives of $G$-orbits of $\calV(\Gamma)$. Suppose $G$ has rank $d$ and assume $G\cong \ZZ^{d}$. We write the action of $G$ on $\Gamma$ as addition: for $a\in G$, $v\in \calV(\Gamma)$ implies $v+a\in \calV(\Gamma)$, and $(u,v)\in \calE(\Gamma)$ implies $(u,v)+a = (u+a,v+a)\in \calE(\Gamma)$. Figure~\ref{fig:periodic-graphs} illustrates two $\ZZ^{2}$-periodic graphs.  We call the collection of $a \in \ZZ^d$ such that $W$ shares an edge with $a+W$ the \demph{support} of $W$, and we denote it by \demph{$\calA(W)$}. 

\begin{figure}[htb]
     \centering
     \begin{subfigure}{0.32\textwidth}
         \centering
         \includegraphics[width=\textwidth]{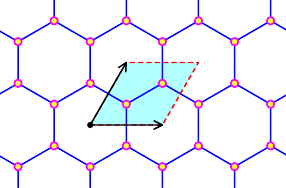}
         \label{fig:graphene}
     \end{subfigure}
     \hspace{0.5in}
      \begin{subfigure}{0.3\textwidth}
         \centering \includegraphics[width=\textwidth]{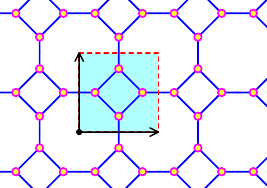}
         \label{fig:K4}
      \end{subfigure}
\caption{The left figure is the hexagonal lattice. The right figure is an abelian cover of $K_{4}$.}
\label{fig:periodic-graphs}
\end{figure}

Let $\Gamma$ be a $\ZZ^d$-periodic graph with fundamental domain $W$. For $Q\vcentcolon = (q_1,\dots, q_d) \in \NN^d$, let \demph{$Q\ZZ$} denote the finite-index subgroup $\bigoplus_{i=1}^{d}q_{i}\ZZ$ of $\ZZ^d$. As $\Gamma$ is also $Q\ZZ$-periodic, the fundamental domain $W$ induces a choice of fundamental domain for $\Gamma$ as a $Q\ZZ$-periodic graph, namely \demph{$W_Q$} $= \bigcup_{0 \le k_i < q_i} (W+k)$, where $k = (k_1,\dots, k_d) \in \ZZ^d$ and $W + k= \{u + k \mid u \in W \}$. We call $W_Q$ the \demph{$Q$-expansion} of $W$. Figure~\ref{fig:submodule-action} gives the $(3,2)$-expansion of the fundamental domain of the hexagonal lattice shown in Figure~\ref{fig:periodic-graphs}. 
\begin{figure}[ht]
     \centering
         \centering \includegraphics[scale=1.2]{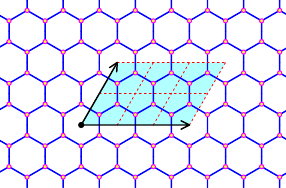}
\caption{The $(3,2)\ZZ$-periodic hexagonal lattice.}
\label{fig:submodule-action}
\end{figure}
A (\demph{$G$-periodic}) \demph{potential} is a function $V\vcentcolon\calV(\Gamma)\to \RR$ such that $V(v{+}a)=V(v)$ for all $a \in G$ and $v \in \calV(\Gamma)$, and a (\demph{$G$-periodic}) \demph{edge labeling} is a function $E\vcentcolon \calE(\Gamma) \to \RR$ such that $E_{(u{+}a,v{+}a)}=E_{(u,v)}$ for all $a \in G$ and $(u,v)\in \calE(\Gamma)$. Together, the pair \demph{$c$} $\vcentcolon=$ $(V,E)$ defines a \demph{labeling} of $\Gamma$. Given a labeling $c =(V,E)$, the \demph{discrete periodic operator $L_{c}$} acting on a function $f$ on $\calV(\Gamma)$ is defined by its value at $u\in \calV(\Gamma) \vcentcolon$
\[
(L_{c}f)(u)\vcentcolon =V(u)f(u)+\sum_{(u,v){\in}\calE(\Gamma)}E_{(u,v)}(f(u)-f(v)).
\]
The second summand is called a \demph{weighted graph Laplacian}, and it is denoted by \demph{$\Delta_{E}$} (and thus $L_{c} = V + \Delta_{E}$).

Discrete periodic operators are bounded and self-adjoint on the Hilbert space of square summable functions on $\calV(\Gamma)$, denoted by \demph{$\ell^{2}(\Gamma)$}.
When $E$ is the constant function $E=1$, $L_{c}$ is a \demph{discrete periodic Schr\"odinger operator}. When $V$ and $E$ are the constant functions $V=0$ and $E=1$, $L_{c}$ is the \demph{graph Laplacian}.

\medskip

\subsection{Floquet Theory}~\label{sec:floquettheory}
For more, see \cite{BerkKuch,FloquetTheory, Overview}. 
Let \demph{$\CC^{\times}$} be the multiplicative group of nonzero complex numbers. The \demph{torus} is the maximal compact subgroup of $\CC^{\times}$ defined by \demph{$\TT$} $\vcentcolon=$ $\{z\in \CC^{\times} \mid \lvert z \rvert = 1\}$. Vectors $z\vcentcolon =\left(z_1,\dots, z_d\right)\in \TT^d$ are called \demph{Floquet multipliers}. The torus $\TT^d$ is the group of unitary characters of $\ZZ^{d}$. 
For $z\in \TT^d$ and $a=\left(a_1,\dots, a_d\right)\in \ZZ^d$, the character value is \demph{$z^{a}\vcentcolon =z_{1}^{a_{1}}\cdots z_{d}^{a_{d}}$}. 

Let $\Gamma$ be a $\ZZ^d$-periodic graph with a fundamental domain $W$. We wish to study the spectrum of a discrete periodic operator $L$ ($=L_c$ for some labeling $c$) acting on $\ell^{2}(\Gamma)$.  For $z\in \TT^{d}$, the \demph{Floquet transform} \demph{$\mathscr{F}$} of a function $g$ on $\calV(\Gamma)$ is given by 
\[ 
g(v) \mapsto \hat{g}(z,v) = \sum_{a \in \ZZ^d} g(v+a)z^{-a}.
\]
Since  $\hat{g}(z,u{+}a) = z^a \hat{g}(z,u)$ for $a\in \ZZ^{d}$, $\hat{g}$ is uniquely determined by its values on $W$. One can see that $\hat{g}(z,u)$ is just the Fourier transform of $g$ restricted to vertices in the $\ZZ^{d}$-orbit of $u$ for $u\in W$. If $g \in \ell^{2}(\Gamma)$, then 
\[\sum_{u \in W} \int_{\TT^d} \lvert \hat{g}(z,u) \rvert^2 dz  < \infty. 
\] 
That is, $\hat{g}(z,u)$ is in \demph{$L^{2}(\TT^{d})$}, the Hilbert space of square-integrable functions on $\TT^d$, for each $u \in W$.

Suppose that $W = \{u_{1},\dots,u_{m}\}$. Given a function $g \in \ell^{2}(\Gamma)$, we can view $\hat{g}(z,\cdot \ )$ as  vector-valued function $(\hat{g}(z,u_1), \hat{g}(z,u_2), \dots, \hat{g}(z,u_m))^T$ in the Hilbert space 
\[
L^2(\TT^d)^{W} = \{ \hat{g}(z,\cdot \ ) \mid \hat{g}(z,u_i) \text{ is in } L^2(\TT^d) \text{ for each } i \in [n] \}.
\]

Like the Fourier transform, the Floquet transform $\mathscr{F}\vcentcolon \ell^{2}(\Gamma) \to L^2(\TT^d)^W$ is a unitary operator, so $L$ and $\mathscr{F} L \mathscr{F}^*$ are unitarily equivalent (where $\mathscr{F}^*$ denotes the inverse Floquet transform).

Define \demph{$L(z)$} to be the $\lvert W\rvert{\times\lvert W\rvert}$ matrix such that, for each $u,v \in W$, 
\begin{equation}~\label{eq:Floq}
L(z)_{u,v} = \delta_{u,v}\left(V(u) + \sum_{(u,\omega) \in \calE(\Gamma)} E((u,\omega)) \right)-\sum_{(u,v{+}a)\in \calE(\Gamma)}E((u,v{+}a))z^{a},
\end{equation} 
where $\delta$ is the Kronecker delta function ($\delta_{u,v} = 1$ when $u=v$ and $\delta_{u,v}=0$ otherwise).  

We will now show that, for any $\hat{g}(z,\cdot \ ) \in L^2(\TT^d)^{W}$,
\begin{equation}~\label{eq:decomposition}(\mathscr{F} L \mathscr{F}^* \hat{g})(z,\cdot \ ) = L(z) \hat{g}(z,\cdot \ ).
\end{equation}
For each $z \in \TT^d$ and $u \in W$, 
\[
(\mathscr{F} L \mathscr{F}^*\hat{g})(z,u) = V(u) \hat{g}(z,u) + \sum_{(u,v{+}a) \in \calE(\Gamma), v \in W} E((u,v{+}a))(\hat{g}(z,u) - z^a \hat{g}(z,v)).
\] 
Notice that $L(z)$ is defined such that for each $z \in \TT^d$ and any $\hat{g}(z,\cdot \ ) \in L^2(\TT^d)^{W}$,
\[
L(z)_{u,v} =  [\hat{g}(z,v)] (\mathscr{F} L \mathscr{F}^* (\hat{g})(z,u)), 
\]
where $[\hat{g}(z,v)] f$ denotes extracting the coefficient of $\hat{g}(z,v)$ in $f$. In this way, 
\[
 (\mathscr{F} L \mathscr{F}^*\hat{g})(z,u) = \sum_{v \in W} L(z)_{u,v} \hat{g}(z,v).
\]
It follows that for each $z \in \TT^d$ and any $\hat{g}(z,\cdot \ ) \in L^2(\TT^d)^{W}$, $(\mathscr{F} L \mathscr{F}^*)\hat{g}(z,\cdot \ ) = L(z) \hat{g}(z,\cdot \ )$. 
That is, $\mathscr{F} L \mathscr{F}^*$ acts as multiplication by the $\lvert W\rvert{\times}\lvert W\rvert$ matrix $L(z)$ on the vector $\hat{g}(z,\cdot \ ) \in L^2(\TT^d)^{W}$ for each fixed $z \in \TT^d$. 
We obtain the following fact. 
\begin{Fact} 
Let $I$ be the identity map on $L^{2}(\TT^{d})$ and $\lambda\in \CC$. The map $\mathscr{F} L \mathscr{F}^* - \lambda I$ $\vcentcolon L^{2}(\TT^{d})^{W} \to L^{2}(\TT^{d})^{W}$ is a bijection if and only if for each $z\in \TT^{d}$, $L(z) - \lambda I$ $\vcentcolon L^{2}(\TT^{d})^{W}\to L^{2}(\TT^{d})^{W}$ is a bijection. It follows that, the spectrum of $L$ is given by the union of the eigenvalues of $L(z)$ for each $z \in \TT^d$. 
\end{Fact}

When $z$ is viewed as an indeterminate, it is easy to see that $L(z)$ is just a $|W|{\times}|W|$ matrix with Laurent polynomial entries in $\CC[z^{\pm}]$. Given a discrete periodic operator $L$, we will often refer to $L(z)$ as the \demph{Floquet matrix} of $L$.

\medskip
\begin{Remark}~\label{Rm:Basis}
A  \demph{Floquet function} $f \vcentcolon \calV(\Gamma) \to \CC$ with Floquet multiplier $z \in \TT^d$ is \demph{quasi-periodic} with respect to the $\ZZ^d$-action $\vcentcolon$ for $a \in \ZZ^d$ and $u \in \calV(\Gamma)$, $f(u{+}a) = z^a f(u)$. Such functions are also sometimes called Bloch functions, or functions satisfying ``Floquet-boundary'' conditions. 

We may also obtain the spectrum of $L$ on $\ell^{2}(\Gamma)$ by studying the spectrum of $L$ acting on the spaces of Floquet functions $f : \calV(\Gamma) \to \CC$ with Floquet multiplier $z$ for each $z \in \TT^d$. It is easy to see that such quasi-periodic functions on $\calV(\Gamma)$ make up the generalized eigenfunctions of $L$, see~\cite{DamanikFillman}.
\hfill $\diamond$
\end{Remark}

\begin{Example}\label{ex:floquet_graphene} 
Let $\Gamma$ be the hexagonal lattice with edges labeled as in Figure~\ref{fig:graphene_labeled}, let $W = \{u,v\}$, and let $x,y$ be Floquet multipliers. After the Floquet transform, the discrete periodic operator becomes
\[
\begin{split}(L_c\hat{f})(u) & = V(u)\hat{f}(u)+ \alpha\left( \hat{f}(u) - \hat{f}(v) \right) + \beta\left( \hat{f}(u) - x^{-1}\hat{f}(v) \right) + \gamma\left( \hat{f}(u) - y^{-1}\hat{f}(v) \right),\\
(L_c\hat{f})(v) & = V(v)\hat{f}(v)+ \alpha\left( \hat{f}(v) - \hat{f}(u) \right) + \beta\left( \hat{f}(v) - x\hat{f}(u) \right) + \gamma\left( \hat{f}(v) - y\hat{f}(u) \right).\\
\end{split}
\] 
\noindent Collecting the coefficients from $\hat{f}(u)$ and $\hat{f}(v)$, the Floquet matrix of $L_c$ is 
\[
L_c(x,y) = \begin{pmatrix}V(u){+}\alpha{+}\beta{+}\gamma & -\alpha{-}\beta x^{-1}{-}\gamma y^{-1}\\ -\alpha{-}\beta x{-}\gamma y & V(v){+}\alpha{+}\beta{+}\gamma\end{pmatrix}.
\]

\vspace{-0.8cm}\hfill $\diamond$
\end{Example}
\begin{figure}[h]
\centering
   \begin{picture}(100,90)
     \put(0,0){\includegraphics{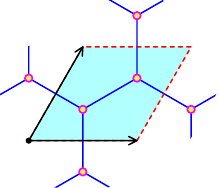}}
     \put(31,31){\small$u$}      \put(93,31){\small$u$}     \put(64,91){\small$u$}
     \put(69,53){\small$v$}      \put( 5,53){\small$v$}     \put(37, -2){\small$v$}
     \put(54,15){\small$x$}
     \put(29,62){\small$y$}
     \put(52,39){\small$\alpha$}
     \put(32,14){\small$\gamma$}   \put(68,73){\small$\gamma$}
     \put(14,40){\small$\beta$}    \put(86,44){\small$\beta$}
   \end{picture}
\caption{The hexagonal lattice with edge labels $(\alpha,\beta,\gamma)$.} 
\label{fig:graphene_labeled}
\end{figure}
The \demph{dispersion polynomial} \demph{$D_{c}(z,\lambda)$} $\vcentcolon =\det(L_{c}(z)-\lambda I)$ is the characteristic polynomial of $L_{c}(z)$, and the \demph{Bloch variety $B_{c}(\Gamma)$} is the zero-set of $D_c(z,\lambda)$, \[ B_{c}(\Gamma) = \{(z,\lambda)\in \mathbb{T}^{d}{\times}\RR \mid D_c(z,\lambda)=0\}. \]  The projection $\pi(z,\lambda) = \lambda$ of the Bloch variety recovers the spectrum of $L_c(z)$, as depicted in Figure~\ref{fig:blochvar}.  We will often omit the labeling, simply writing $L(z)$, $D(z,\lambda)$, and $B(\Gamma)$. The \demph{Fermi variety} at energy $\lambda_{0}$, is the level-set $F_{\lambda_0}\vcentcolon =\{ z \in \mathbb{T}^{d} \mid D(z,\lambda_{0})=0\}$.
\begin{figure}[ht]
    \centering
      \begin{picture}(178,138)(-39,0)
     \put(0,0){\includegraphics[height=140.4pt]{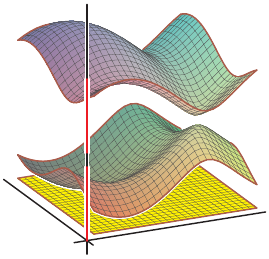}}
     \put(130,15){\small$x$} \put(-1,31){\small$y$}
     \put(50,129){\small$\RR$}    \put(145,28){\small$\TT^2$}
     \put(-55,49){\small$\pi(B(\Gamma))$}
     \thicklines
       \put(-14,50){{\color{white}\line(3,-1){52}}}
       \put(-14,49.5){{\color{white}\line(3,-1){52}}}
       \put(-14,49){{\color{white}\line(3,-1){52}}}
       \put(-14,48.5){{\color{white}\line(3,-1){52}}}
       \put(-14,48){{\color{white}\line(3,-1){52}}}
     \thinlines

       \put(-14,54){\vector(3,1){54}}    \put(-14,49){\vector(3,-1){54}}
   \end{picture}
    \caption{A Bloch variety of the hexagonal lattice,
    $(\alpha,\beta,\gamma) = (6,3,2)$.}
    \label{fig:blochvar}
\end{figure}

 It is natural to allow $V$ and $E$ to take complex values and variables $z \in (\CC^\times)^d$. In this way, $D(z,\lambda)$ becomes a polynomial in $\CC[z^{\pm}, \lambda]$, and the Bloch variety becomes an algebraic hypersurface in $(\CC^\times)^d{\times}\CC$.  When allowing $V$ and $E$ to take complex values, $L(z)$ may no longer be self-adjoint for $z \in \TT^d$, but it still satisfies $L(z) = L(z^{-1})^T$.

\subsection{Changing the Period Lattice of the Potential} \label{coveringsubsec}
Let $\Gamma$ be a $\ZZ^d$-periodic graph with fundamental domain $W$, where \demph{$m$} $\vcentcolon=$ $\lvert W \rvert$, and let $L$ be a discrete periodic operator acting on $\ell^{2}(\Gamma)$. Fix $Q$ $\vcentcolon = (q_{1},\dots,q_{d})$ $\in 
\NN^d$ and let \demph{$\lvert Q \rvert$} $\vcentcolon =\prod_{i=1}^{d}q_{i}$. We wish to study the dispersion polynomial $D(z,\lambda)$ of $L$ with labeling $(V_Q,E)$, where $E \vcentcolon \calE(\Gamma) \to \CC$ is a $\ZZ^d$-periodic edge labeling and $V_Q\vcentcolon \calV(\Gamma) \to \CC$ is a $Q\ZZ^d$-periodic potential, rather than a $\ZZ^d$-periodic potential. We will denote operator $L$ with labeling $(V_Q,E)$ by \demph{$L_{Q}$}.

As $Q\ZZ$ is a free full rank subgroup of $\ZZ^{d}$, the $\ZZ^{d}$-periodic graph $\Gamma$ is also $Q\ZZ$-periodic. Thus, as discussed in Section~\ref{subsec:periodicg}, $W_Q$ is a fundamental domain with respect to the action of $Q\ZZ$. The Floquet matrix of $L_Q$ with respect to the $Q\ZZ$-periodic graph $\Gamma$ with fundamental domain $W_Q$ is denoted by \demph{$L_Q(z)$}. Since $\lvert W_{Q}\rvert =$ $|Q|m$, $L_Q(z)$ is a $\lvert Q \rvert m \times \lvert Q \rvert m$ matrix of Laurent polynomial entries.

We now discuss an alternative representative of $L_Q(z)$ that comes from a change of basis and after a change of variables. For each $z \in \TT^d$, $L_Q(z)$ acts on the space of Floquet functions on $\calV(\Gamma)$ with Floquet multiplier $z$ (see Remark~\ref{Rm:Basis}). Consider the surjective group homomorphism
\begin{align}~\label{eq:covering}
\begin{split}
    \phi : \quad (\CC^{\times})^{d}\quad &\longrightarrow \quad (\CC^{\times})^{d}\\ (z_{1},\dots,z_{d}) &\longmapsto (z_{1}^{q_{1}},\dots,z_{d}^{q_{d}}), 
\end{split}
\end{align}
with \demph{kernel group ~$\calU_Q$} $\vcentcolon=$ $\prod_{i=1}^{d}\calU_{q_{i}}$, where ~$\calU_{q_{i}}$ is the multiplicative group of $q_{i}$th roots of unity. For each $\rho\in \calU_{Q}$ and $u\in W$, consider the function $e_{\rho,u}\vcentcolon \TT^{d}{\times}\ZZ^{d}\to \CC$ with value $1$ at $u\in W\vcentcolon$
\begin{equation*}
    e_{\rho,u}(z,a) = (\rho z)^{a} \vcentcolon = \prod_{i=1}^{d}(\rho_{i}z_{i})^{a_{i}}.
\end{equation*}
For each $\rho\in \calU_{Q}$, we obtain a well-defined function
\begin{equation}~\label{eq:basis}
\begin{split}
    e_{\rho}\vcentcolon \TT^{d}{\times}\calV(\Gamma) &\longrightarrow \CC\\
    \quad (z,v) & \longmapsto e_{\rho,u}(z,a),
\end{split}
\end{equation}
where $v = u{+}a$ for $a\in \ZZ^{d}$ and $u\in W$. Since $e_{\rho}(z,b{+}u) = (\rho z)^{b}e_{\rho}(z,u)$ for $b\in \ZZ^{d}$, $e_{\rho}$ is uniquely determined by its values on $W$.

Writing \demph{$Q_{i}$} $\vcentcolon = q_{i}\varepsilon_{i}$, where $\varepsilon_{i}$ is the $i$th standard basis vector of $\RR^{d}$, it follows that for $u\in W$, $e_{\rho}(z,Q_{i}{+}u{+}a) = z_{i}^{q_{i}}e_{\rho}(z,u{+}a)$. The following lemma is well-known. 

\begin{Lemma}{\cite[Lemma 2.2]{GKT}}
     The set $\{e_{\rho}(z,\cdot \ ) \mid \rho\in \calU_{Q}\}$ forms a basis of the vector space of functions $\psi$ on $\calV(\Gamma)$ satisfying $\psi(Q_{i}{+}u{+}a) = z_{i}^{q_{i}}\psi(u{+}a)$ for $u\in W$ and $i\in [d]$.
\end{Lemma}

That is, the functions $e_{\rho}$ form a basis for Floquet functions with Floquet multiplier $z^{Q}$ with respect to the $Q\ZZ$-action. For each $z$, these are exactly the generalized eigenfunctions of $L_Q$ after the cover map~\eqref{eq:covering}, and thus we may obtain a new matrix representation for $L_Q(z_1^{q_1},\dots, z_d^{q_d})$ in the basis given by the functions of ~\eqref{eq:basis}. For each $\rho\in \calU_{Q}$, the weighted graph Laplacian in the basis~\eqref{eq:basis} is 
\[
(\Delta_E e_{\rho})(z,u) = \sum_{(u,v+a) \in \calE(\Gamma)} E_{(u,v+a)} (e_{\rho} (z,u) - (\rho z)^a e_{\rho}(z,v)),
\]
where $u,v \in W$. The action of $\Delta_E$ with respect to the basis $\{e_{\rho}( z,u) \mid \rho \in \calU_Q , u \in W\}$ is given by multiplication by a diagonal matrix \demph{$\hat{\Delta}_E(z)$}, whose entries are matrices indexed by $\calU_{Q}\times\calU_{Q}$, where each diagonal entry $\hat{\Delta}_E(z)_{\rho, \rho}$ is the $m \times m$ matrix indexed by $W \times W$ given by 
\begin{equation*}
\Delta_E(\rho z)_{u,v} = \delta_{u,v}\left(\sum_{(u,w)\in\calE(\Gamma)}E_{(u,w)}\right) - \sum_{(u,v+a)\in\calE(\Gamma)}E_{(u,v+a)}(\rho z)^{a}.
\end{equation*}
In other words, $\hat{\Delta}_{E}(z)$ is the block-diagonal matrix given by $\lvert Q\rvert{\times}\lvert Q\rvert$ blocks of $m{\times}m$ matrices $\vcentcolon$
\begin{equation*}
    \left(\hat{\Delta}_{E}(z)\right)_{\rho,\rho'}\vcentcolon = \delta_{\rho,\rho'}\cdot \Delta_{E}(\rho z),
\end{equation*}
where the submatrix $\Delta_{E}(\rho z)$ represents $\Delta_{E}$ with Floquet multiplier $\rho z$ (with respect to the $\ZZ^{d}$-action). Explicitly,
\begin{equation*}
\hat{\Delta}_{E}(z) = \quad
\begin{pNiceArray}{c|c}[first-col,first-row]
&         \ \ (u,\rho)\ \ (v,\rho) \ \ \cdots & \ \ (u,\rho') \ \ (v,\rho') \ \ \cdots\\
(u,\rho) & & \\
&\Delta_{E}(\rho z)& \hspace{-0.2in}0\\
(v,\rho) & & \\
\vdots   & & \\
\hline
(u,\rho')& & \\
& \hspace{-0.2in}0 & \Delta_{E}(\rho' z)\\
(v,\rho')& & \\
\vdots   & & \\
\end{pNiceArray}.
\end{equation*}

In order to discuss the potential $V$ in this new basis, we will take a discrete Fourier transform. For each $\mu\in \calU_{Q}$ and $u{+}a\in W_{Q}$, the discrete Fourier transform of the potential $V$ is
\[
(Ve_\mu)(z,v+a) = V(v+a) e_{\mu}(z,v+a) = \sum_{\rho \in \calU_Q} \hat{V}_{\rho, \mu}(v) e_\rho(z,v+a) =  \sum_{\rho \in U_Q} \hat{V}_{\rho, \mu}(v) (\rho z)^a e_\rho(z,v), 
\]
where $\hat{V}_{\rho, \mu}(v)$ is the Fourier coefficient of $V$ on the orbit of $v\in W$ (see also \cite[Equation 4.5]{fillman2023algebraic}). 

We wish to solve for the Fourier coefficients $\hat{V}_{\rho, \mu}(v)$. Notice that 
\[
V(v+a) (\mu z)^a e_{\mu}(z,v) = \sum_{\rho \in U_Q} \hat{V}_{\rho, \mu}(v) (\rho z)^a e_\rho(z,v).
\]
Canceling out the $z^a$ on both sides, multiplying both sides by $\kappa^{-a}$ for some $\kappa \in \calU_Q$, and summing both sides over $a \in \ZZ^d$ such that $v+a \in W_Q$,  we obtain

\[\sum_{v + a \in W_Q} V(v+a) e_{\mu}(z,v) (\mu\kappa^{-1})^a = \sum_{v + a \in W_Q} \sum_{\rho \in U_Q} \hat{V}_{\rho, \mu}(v) (\rho \kappa^{-1})^a e_\rho(z,v). \]

When $\rho \neq \kappa$, the sum $\sum_{v+a \in W_Q}  \hat{V}_{\rho, \mu}(v) (\rho \kappa^{-1})^a e_\rho(z,v)$ is zero.  Thus the right-hand side simplifies to 
\[ 
\sum_{v+a \in W_Q} \sum_{\rho \in U_Q} \hat{V}_{\rho, \mu}(v) (\rho \kappa^{-1})^a e_\rho(z,v) =  \sum_{v+a \in W_Q} \hat{V}_{\kappa, \mu}(v) e_\kappa(z,v) = \lvert Q \rvert \hat{V}_{\kappa, \mu}(v) e_\kappa(z,v). 
\]
Replacing $\kappa$ with $\rho$, we see that
\[
\hat{V}_{\rho,\mu}(v) e_\rho(z,v) =   \frac{e_\mu(z,v) }{\lvert Q \rvert}\sum_{v+a \in W_Q} V(v+a) (\mu \rho^{-1})^{a}.
\]
Let \demph{$\hat{V}$} be the matrix representation of $V$ in the basis given by the functions of ~\eqref{eq:basis}; that is, $\hat{V}$ acts on the basis function $e_\rho(z,v)$ by
\[
\hat{V} e_\rho(z,v) = \sum_{\mu \in \calU_Q} \hat{V}_{\rho,\mu}(v) e_\rho(z,v) =  \sum_{\mu \in \calU_Q} \frac{e_\mu(z,v) }{\lvert Q \rvert}\sum_{v+a \in W_Q} V(v+a) (\mu \rho^{-1})^{a} \ \ \ \  \big( = Ve_{\rho}(z,v)\big).
\] 
This is a $Q \times Q$ block matrix with $m \times m$ entries, indexed the same as $\hat{\Delta}_E(z)$. Each $\hat{V}_{\rho,\mu}$ is an $m \times m$ diagonal matrix such that $(\hat{V}_{\rho,\mu})_{u,u} =  \hat{V}_{\rho,\mu}(u)$.
\medskip
\begin{Remark}~\label{rm:periodic}
If $V$ is also $\ZZ^d$-periodic, then
\[
\begin{split}
\hat{V}_{\rho,\mu}(v) e_{\rho}(z,v) & = \frac{1}{\lvert Q \rvert}\sum_{v + a \in W_Q} V(v+a)e_{\mu}(z,v) (\mu \rho^{-1})^{a} \\
& = \frac{1}{\lvert Q \rvert}\sum_{v+a \in W_Q} V(v)e_{\mu}(z,v) (\mu \rho^{-1})^{a}\\
&= \frac{V(v) e_{\mu}(z,v)}{\lvert Q \rvert} \sum_{v+a \in W_Q} (\mu \rho^{-1})^a .
\end{split}
\]
Thus, $\hat{V}_{\rho,\mu}(v) = V(v)$ when $\rho = \mu$ and is $0$ otherwise. That is, $\hat{V}$ is a diagonal matrix. \hfill$\diamond$\medskip
\end{Remark}

It follows that the $m \lvert Q \rvert \times m \lvert Q \rvert$ matrix $L_Q(z^Q)$ with respect to the basis $\{e_\mu(z,v) \mid \mu \in \calU_{Q} , v \in W\}$, is
\[
\textcolor{blue}{\hat{L}_Q(z)} =  \hat{V} + \hat{\Delta}_E(z).
\] 
Let $D_Q(z,\lambda) = \det(L_Q(z) - \lambda I)$ and $\hat{D}_Q(z,\lambda) = \det(\hat{L}_Q(z) - \lambda I)$. As $\hat{L}_{Q}(z,\lambda)$ is $L_Q(z^Q)$ after a change of basis, we see that
\[
D_Q(z^Q,\lambda) = \det(L_Q(z^Q) - \lambda I) = \det(\hat{L}_Q(z) - \lambda I) = \hat{D}_Q(z,\lambda).
\]
\begin{Example}\label{ex:expand_graphene} 
Let us continue Example~\ref{ex:floquet_graphene}. When we view the hexagonal lattice as $\ZZ^2$-periodic, as in the case of Figure~\ref{fig:periodic-graphs}, with a $\ZZ^2$-periodic potential $V$; we get the Floquet matrix
\[
L(x,y) = \begin{pmatrix}V(u){+}\alpha{+}\beta{+}\gamma & -\alpha{-}\beta x^{-1}{-}\gamma y^{-1}\\ -\alpha{-}\beta x{-}\gamma y & V(v){+}\alpha{+}\beta{+}\gamma\end{pmatrix}.
\]
Let $Q = (2,1)$ and let $V_Q$ be a $Q\ZZ$-periodic potential, then $L_Q(x,y)$ is given by the matrix
\[
\begin{pmatrix}V_Q(u){+}\alpha{+}\beta{+}\gamma & -\alpha{-}\gamma y^{-1} & 0 & {-}\beta x^{-1}\\ -\alpha{-}\gamma y & V_Q(v){+}\alpha{+}\beta{+}\gamma & {-}\beta & 0\\ 0 & {-}\beta & V_Q((1,0)+u){+}\alpha{+}\beta{+}\gamma &  {-}\alpha{-}\gamma y^{-1} \\{-}\beta x& 0 & {-}\alpha{-}\gamma y & V_Q((1,0)+v){+}\alpha{+}\beta{+}\gamma
\end{pmatrix}.
\]
If $V$ satisfies $V(u)=$ $\frac{V_{Q}(u) + V_{Q}((1,0)+u)}{2}$ and $V(v)= $$\frac{V_Q(v) + V_Q((1,0)+v)}{2}$, then $\hat{L}_Q(x,y)$ is a $2 \times 2$ block matrix with each entry a $2 \times 2$ matrix. Explicitly, 
\[
\hat{L}_Q(x,y) = \begin{pmatrix} L(x,y) & (\hat{V}_Q)_{1,-1}\\ (\hat{V}_Q)_{-1,1} & L(-x,y)\end{pmatrix}, \text{ where }
\]
\[
(\hat{V}_Q)_{1,-1} =  (\hat{V}_Q)_{-1,1} = \begin{pmatrix} \frac{V_Q(u) - V_Q((1,0)+u)}{2} &0\\0 & \frac{V_Q(v) - V_Q((1,0)+v)}{2}\end{pmatrix}.
\]

\vspace*{-0.9cm} \hfill$\diamond$\medskip
\end{Example}
\vspace*{0.4cm}

\subsection{Algebra and Discrete Geometry}\label{subsec:algdiscrete}
Let us recall some standard notions from algebra and discrete geometry. For more, see~\cite{Ewald}. A \demph{Laurent polynomial} \demph{$f$} $\in \CC[z^{\pm}]$ is a linear combination of finitely many monomials,
\[
f = \sum_{a \in \ZZ^d}c_{a}z^{a} \quad c_{a} \in \CC.
\] 
The \demph{support} of $f$, denoted by \demph{$\mathcal{A}(f)$}, is the set of indices $a \in \ZZ^{d}$ such that $c_{a}\neq0$, and the convex hull of $\mathcal{A}(f)$ is a polytope in $\RR^{d}$, called the \demph{Newton polytope} of $f$, denoted by \demph{$\newt{f}$}. The \demph{dimension} of a polytope is the dimension of the smallest affine subspace that contains it.

Let $P \subset \RR^{d}$ be a polytope and $w\in \ZZ^{d}$. Since $P$ is closed and bounded, the induced dot product $c\mapsto w\cdot c$ defines a linear function on $\RR^{d}$. Set \demph{$h_{P}(w)$} $\vcentcolon = \min\{w\cdot c\mid c\in P\}$. The \demph{face} $P_w$ \demph{exposed} by $w$ is a polytope and is given by $P_w = \{ p \in P \mid w \cdot p = h_{P}(w)\}$. For a polynomial $f = \sum_{a \in \calA(f)} c_a z^a$ and a face $F=\newt{f}$, the \demph{facial polynomial} of $F$ is \demph{$f|_F$} $\vcentcolon= \sum_{a \in F \cap \calA(f)} c_a z^a = f|_w$.

For a $d$-dimensional polytope $P$, the $(d{-}1)$-dimensional faces of $P$ are its \demph{facets} and the $0$-dimensional faces of $P$ are its \demph{vertices}. Since the vertices of $\newt{f}$ lie in $\mathcal{A}(f)\subseteq \ZZ^{d}$, this polytope is \demph{integral}. A monomial $z^a$ such that $\newt{z^a}$ is a vertex of $\newt{f}$ is an \demph{extreme monomial} of $f$.

The \demph{Minkowski sum} of polytopes $P$ and $K$ is a polytope, and is given by
\[
P+K \vcentcolon = \{p+k \ \ | \ \ p\in P, k\in K\}.
\]
 An integral polytope $P$ is \demph{indecomposable} if whenever $P$ is a Minkowski sum $P=K_{1}+K_{2}$, then one of $K_1$ or $K_2$ is a point. Otherwise, $P$ is \demph{decomposable}. An integral polytope $K$ is \demph{homothetic} to $P$ if there exists a rational number $r \geq 0$ and a point $a\in \ZZ^{d}$ such that $K = rP + a$. The integral polytope $P$ is \demph{only homothetically decomposable} if whenever $P = K_{1} + K_{2}$, one of the summands is homothetic to $P$. In this case, it follows that both summands are homothetic to $P$.

\begin{Example}
Let $H$ be a hyperplane of $\RR^d$, let $Y \subset H$ be a finite set,  and let $v$ be a point in $\RR^d \smallsetminus H$. The convex hull of $Y \cup \{v\}$ is a \demph{pyramid} (see Figures~\ref{fig:gpolytope} and~\ref{fig:densegraphpolytope} for examples).  Pyramids are only homothetically decomposable~\cite{Gao}. Thus given a polynomial of the form $g(x,y,z) = z^a + f(x,y)$, $\newt{g}$ is only homothetically decomposable. If $a=1$, then $\newt{g}$ is indecomposable.
\hfill$\diamond$\medskip
\end{Example}
\begin{figure}[ht]
    \centering
    \includegraphics{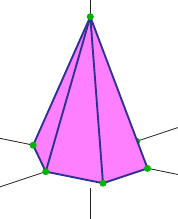}
    \caption{The Newton polytope of $D(x,y,\lambda)$ from Example~\ref{ex:floquet_graphene} is only homothetically decomposable.}
    \label{fig:gpolytope}
\end{figure}

Monomials with nonzero coefficients are the invertible elements in $\CC[z^{\pm}]$, forming the group of units $(\CC[z^{\pm}])^{\times}$. An element of a ring is irreducible if it is a nonzero nonunit that cannot be expressed as the product of two nonunits. Therefore, a Laurent polynomial $f$ is \demph{irreducible} if it is not a monomial, and whenever there exist Laurent polynomials $g$ and $h$ such that $f = gh$, then either $h$ or $g$ is a monomial. Given polynomials $f$ and $g$, the Newton polytope of their product $fg$ is given by $\newt{fg} = \newt{f} + \newt{g}$.
 A Laurent polynomial $f$ is \demph{only homothetically reducible} if it is not a monomial and if $f = gh$ implies that either $\newt{g}$ or $\newt{h}$ is homothetic to $\newt{f}$. Any irreducible Laurent polynomial is only homothetically reducible.

 \begin{Example}
 A polynomial with an only homothetically decomposable Newton polytope is only homothetically reducible. The converse is false. Consider the reducible polynomial $f(x,y) = (xy + x + y + 2)^2$. Here $\newt{f}$ is a square and thus can be decomposed into the two segments $\newt{1+y+y^2}$ and $\newt{1+x+x^2}$, neither of which is homothetic to $\newt{f}$. However, the polynomial is only homothetically reducible as each factor $xy+x+y+2$ is irreducible and $\newt{f} = 2\newt{xy+x+y+2}$. \hfill$\diamond$\medskip
\end{Example}

\section{From Only Homothetic Decomposability to Only Homothetic Reducibility}~\label{Sec:OHD2OHR}\label{sec:3-Bragg}

Only homothetic decomposability was considered in ~\cite{Shephard, McMullen}, where it is shown that if enough faces of a polytope are only homothetically decomposable, then the polytope itself must be only homothetically decomposable. Here we aim to prove an analogous result for only homothetically reducible Laurent polynomials.

 A \demph{strong chain of faces} of a polytope $P$ is a sequence of faces $F_1, \dots, F_n$ of $P$ \demph{of length $n$} such that for each $i$, $\dim F_{i}\cap F_{i+1}\geq 1$. 

\begin{Example}
Any adjacent triangular facets of a $3$-dimensional pyramid (such as those of Figure~\ref{fig:gpolytope}) share an edge, and thus give a strong chain of faces of length $2$.\hfill$\diamond$\medskip
\end{Example}

 We will abuse notation slightly throughout the following proofs. In particular, if $f,g,$ and $h$ are Laurent polynomials such that $f=gh$ and $F$ is a face of $\newt{f}$, we will write $f|_F = g|_F h|_F$ as the factorization of $f|_F$. Recall that there exists an inner normal $w \in \RR^d$ that exposes $F$ (and is such that $f|_F = f|_w$), and so really we mean that $g|_F = g|_w$ and $h|_F = h|_w$. We also assume that for any polytope $P$, $0\cdot P = \{\bfz\}$, where $\bfz$ is the vector of all zeroes.
\medskip
\begin{Remark}~\label{Rm:HomSimp}
If $f$ is only homothetically reducible then there exists Laurent polynomials $g,h$, and $r,t \in \QQ$ such that $f = g h$, $r \newt{f} =\newt{g}$, and $t\newt{f} = \newt{h}$. 

Indeed, by the definition of only homothetic irreducibility there exists $a_g$ and $a_h$ in $\ZZ^d$ so that $r \newt{f} + a_g =\newt{g}$, $t\newt{f} + a_h = \newt{h}$, and so $r \newt{f} + a_g + t\newt{f} + a_h  = \newt{f}$. It follows that $a_g + a_h = \bfz$, and thus there exists $g' = z^{a_h} g$ and $h' = z^{a_g} h$ such that  $r \newt{f} =\newt{g'}$ and $t\newt{f} = \newt{h'}$. \hfill $\diamond$
\end{Remark}

\begin{Lemma}\label{lem:homothetic}
 Let $f,g,$ and $h$ be Laurent polynomials and suppose that $f = gh$. Let $F_1$ and $F_2$ be faces of $\newt{f}$ with $\dim F_{1}\cap F_{2}\geq 1$ whose corresponding facial polynomials, $f|_{F_1}$ and $f|_{F_2}$, are only homothetically reducible. If $\newt{g|_{F_1}} = r\newt{f|_{F_1}}$ and $\newt{h|_{F_1}} = t\newt{f|_{F_1}}$ for some pair $r,t\in\QQ$, then $\newt{g|_{F_2}} = r\newt{f|_{F_2}}$ and $\newt{h|_{F_2}} = t\newt{f|_{F_2}}$.
\end{Lemma}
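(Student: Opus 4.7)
The plan is to pass between $F_1$ and $F_2$ through their common subface $E := F_1 \cap F_2$, exploiting two facts: (i) the Minkowski decomposition $\newt{f} = \newt{g} + \newt{h}$ restricts compatibly to every face via exposing functionals, and (ii) the ratio of a homothety between two polytopes is uniquely determined once the smaller polytope has positive dimension.

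First I would record the structural setup. Since $f = gh$, we have $\newt{f} = \newt{g} + \newt{h}$, and for each $w \in \ZZ^{d+1}$ the facial polynomial factors as $f_w = g_w h_w$, giving $\newt{f}_w = \newt{g}_w + \newt{h}_w$. Pick $w_i$ exposing $F_i$ and set $G_i := \newt{g_{F_i}} = \newt{g}_{w_i}$, $H_i := \newt{h_{F_i}} = \newt{h}_{w_i}$, so $F_i = G_i + H_i$. By hypothesis $G_1 = rF_1 + a_1$ and $H_1 = sF_1 + b_1$ (homothety is understood up to translation), and comparing intrinsic volumes in $F_1 = G_1 + H_1$ forces $r + s = 1$. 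Since $f_{F_2} = g_{F_2}\,h_{F_2}$ is only homothetically reducible, at least one of $G_2, H_2$ is homothetic to $F_2$; but then both are, because if $G_2 = r'F_2 + a_2$, the vertex-level identity $(F_2)_w = (G_2)_w + (H_2)_w$ forces every vertex of $H_2$ to have the form $(1-r')v - a_2$ for $v$ a vertex of $F_2$, yielding $H_2 = s'F_2 + b_2$ with $s' = 1 - r'$.

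The key step is then to compute $E_g := \newt{g}_{w_E}$ in two ways, where $w_E$ exposes $E$ in $\newt{f}$. The inclusion $E \subseteq F_1$ implies $E_g \subseteq G_1$ (for any $g \in E_g$ and any $h \in E_h$, $g+h \in E \subseteq F_1$, so $g \in G_1$ by uniqueness of Minkowski decompositions of faces), and hence $E_g = (G_1)_{w_E}$; symmetrically $E_g = (G_2)_{w_E}$. Substituting the homothety descriptions yields
\[
rE + a_1 \;=\; E_g \;=\; r'E + a_2.
\]
The hypothesis $\dim E \geq 1$ now forces $r = r'$: if $r > 0$, comparing the intrinsic $(\dim E)$-dimensional volumes of the two sides gives $r^{\dim E} = (r')^{\dim E}$, hence $r = r'$; if $r = 0$, then $E_g$ is a point, which forces $r' = 0$ since $E$ has positive dimension. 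The identical argument applied to $H_1, H_2$ gives $s = s'$, completing the proof.

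The main obstacles are two small technicalities: (a) verifying $(G_1)_{w_E} = \newt{g}_{w_E}$ (and analogously for $G_2$), which is where the containment $E \subseteq F_1$ is used; and (b) upgrading only-homothetic-reducibility of $f_{F_2}$ from "one of $G_2, H_2$ is homothetic to $F_2$" to "both are", which is the vertex bookkeeping above. Once these are in place, the proof reduces to a clean exposing-functional calculation in which the positive-dimension hypothesis $\dim E \geq 1$ does the essential work of pinning down the scaling ratio.
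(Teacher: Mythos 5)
Your proof is correct and follows essentially the same route as the paper's: restrict the homothety data from $F_1$ to the common face $F_1\cap F_2$ via the compatibility of Minkowski decompositions with passing to faces, then use only homothetic reducibility of $f_{F_2}$ together with $\dim(F_1\cap F_2)\ge 1$ to pin down the same ratios $r,s$ on $F_2$. Your write-up simply makes explicit the details (uniqueness of the ratio on a positive-dimensional face, and that homothety of one summand forces homothety of the other) that the paper's terser argument leaves implicit.
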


\begin{proof}
As $f=gh$, we have that $f|_{F_1} = g|_{F_1} h|_{F_1}$. As $f|_{F_1}$ is only homothetic reducible we have that $r \newt{f|_{F_1}} =\newt{g|_{F_1}}$ and $t\newt{f|_{F_1}} = \newt{h|_{F_1}}$ for some $r,t \in \QQ$.  Let $F' = F_1 \cap F_2$. As $F' \subset F_1$, it follows that $\newt{g|_{F'}} = r\newt{f|_{F'}}$ and $\newt{h|_{F'}} = t\newt{f|_{F'}}$. The polynomial $f|_{F_{2}}$ is only homothetically reducible and must agree with its restriction to $F'$; it follows that $r \newt{f|_{F_2}} =\newt{g|_{F_2}}$ and $t\newt{f|_{F_2}} = \newt{h|_{F_2}}$. 
\end{proof} 

\begin{Theorem}\label{thm:homothetic}
  Let $f$ be a Laurent polynomial, and suppose $f = gh$. If for each pair $(a,b)$ of distinct vertices of $\newt{f}$ there is a strong chain of faces $F_1, \dots, F_n$ such that $a \in F_1$, $b \in F_n$, and for each $F_{i}$, the corresponding facial polynomial $f|_{F_{i}}$ is only homothetically reducible, then $f$ is only homothetically reducible.
\end{Theorem}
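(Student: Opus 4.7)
The plan is to use Lemma~\ref{lem:homothetic} to propagate a single homothety ratio and translation across the polytope $\newt{f}$. Assume $f=gh$; then $\newt{f} = \newt{g}+\newt{h}$ as a Minkowski sum, and every face $F$ of $\newt{f}$ decomposes as $F = \newt{g_F}+\newt{h_F}$. Whenever $f_F$ is only homothetically reducible, both Minkowski summands are forced to be homothetic to $F$, so one can write $\newt{g_F} = r_F F + t_F^g$ and $\newt{h_F} = s_F F + t_F^h$ for nonnegative rationals $r_F, s_F$ with $r_F + s_F = 1$.

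Next, I would fix a vertex $a_0$ of $\newt{f}$ and extract the scalar $r$ from the first face $F_1$ of any strong chain starting at $a_0$. For any other vertex $b$ of $\newt{f}$, the hypothesis supplies a strong chain $a_0 \in F_1, \dots, F_n \ni b$ with each $f_{F_i}$ only homothetically reducible. Applying Lemma~\ref{lem:homothetic} inductively to consecutive pairs $(F_i, F_{i+1})$ forces $r_{F_i} = r$ throughout the chain, so in particular $\newt{g_{F_n}} = rF_n + t_{F_n}^g$.

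The step I expect to be the main subtlety is showing that the translations $t_F^g$ are consistent across the chain, not just the scalars. Because $\dim(F_i \cap F_{i+1}) \geq 1$, the intersection (viewed as a polytope) has at least two vertices, each of which is a vertex of $\newt{f}$. At any such shared vertex $p$, the unique decomposition $p = p_g + p_h$ with $p_g \in \newt{g}$ and $p_h \in \newt{h}$ (coming from the factorization of the monomial $f_p = g_{p_g}h_{p_h}$) forces $rp + t_{F_i}^g = p_g = rp + t_{F_{i+1}}^g$, so the two translations agree. Iterating along the chain and over all choices of the target vertex $b$, we obtain a single translation $t$ such that each vertex $b$ of $\newt{f}$ corresponds to the vertex $rb + t$ of $\newt{g}$.

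Consequently $\newt{g} = r\newt{f} + t$, which is homothetic to $\newt{f}$ by definition (including the degenerate case $r = 0$, where $g$ is a monomial and $\newt{h}$ is homothetic to $\newt{f}$ by symmetry). Hence any factorization $f = gh$ has one of $\newt{g}, \newt{h}$ homothetic to $\newt{f}$, so $f$ is only homothetically reducible.
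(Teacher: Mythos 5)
Your proof is correct and follows essentially the same route as the paper: propagate the homothety ratio along the strong chain via Lemma~\ref{lem:homothetic}, and then recover $\newt{g}$ and $\newt{h}$ from the vertex decompositions of $\newt{f}$. Your explicit check that the translation parts agree across consecutive faces (using a shared vertex of $F_i \cap F_{i+1}$ and the uniqueness of the Minkowski decomposition of a vertex of $\newt{f}$) is a detail the paper's proof leaves implicit, but it does not change the argument.
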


\begin{proof}
    By Lemma~\ref{lem:homothetic}, there exist a pair of rational numbers $r,t \in \QQ$ such that $r \newt{f|_{F_i}} =\newt{g|_{F_i}}$ and $t\newt{f|_{F_i}} = \newt{h|_{F_i}}$ for all $i=1,\dots,n$. As $a \in F_1$ and $b \in F_n$, we have that $r \newt{f|_{a}} =\newt{g|_{a}}$, $t\newt{f|_{a}} = \newt{h|_{a}}$, $r \newt{f|_{b}} =\newt{g|_{b}}$, and $t\newt{f|_{b}} = \newt{h|_{b}}$. This is the case for all vertex pairs $(a,b)$ of $\newt{f}$. In particular, we may fix $a$ and let $b$ vary over the other vertices. As any vertex of $\newt{f}$ must come from the Minkowski sum of a pair of vertices $u,v$ where $u \in \newt{g}$ and $v \in \newt{h}$, and any vertex $u$ of $\newt{g}$ or $v$ of $\newt{h}$ must be a Minkowski summand for some vertex of $\newt{f}$; it follows that $r \newt{f} =\newt{g}$ and $t\newt{f} = \newt{h}$. 
\end{proof}

\begin{Corollary}\label{cor:irred}
Suppose that $f$ is only homothetically reducible. If there is a face $F$ of $\newt{f}$ such that $f|_F$ is irreducible, then $f$ is irreducible. 
\end{Corollary}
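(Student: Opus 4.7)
The plan is to take any factorization $f = gh$ in $\CC[z^{\pm}]$ and force one of $g, h$ to be a monomial by restricting to the distinguished face $F$. Since $f$ is only homothetically reducible, at least one of $\newt{g}, \newt{h}$, say $\newt{g}$, is homothetic to $\newt{f}$: write $\newt{g} = r\newt{f} + a$ for some $r \in \QQ_{\geq 0}$ and $a \in \ZZ^d$. Combining the Minkowski identity $\newt{f} = \newt{g} + \newt{h}$ with $\newt{f} = r\newt{f} + (1-r)\newt{f}$ and using Minkowski cancellation for polytopes, I would upgrade this to $\newt{h} = (1-r)\newt{f} - a$, so both summands are homothetic to $\newt{f}$ with complementary ratios $r, 1-r \in [0,1] \cap \QQ$.

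Next I would restrict to the face. If $w$ is an exposing vector for $F$ in $\newt{f}$, then $w$ exposes the corresponding faces $G \vcentcolon = rF + a$ in $\newt{g}$ and $H \vcentcolon= (1-r)F - a$ in $\newt{h}$, and the standard identity for facial polynomials of a product gives $f_F = g_G \cdot h_H$. By hypothesis $f_F$ is irreducible, so it is in particular not a unit in $\CC[z^{\pm}]$, meaning $|\calA(f_F)| \geq 2$ and therefore $\dim F \geq 1$.

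Irreducibility of $f_F$ now forces one of $g_G, h_H$ to be a monomial. If $g_G$ is a monomial, then $G$ is a single lattice point, i.e.\ $\dim G = 0$; but $G = rF + a$ has $\dim G = \dim F \geq 1$ whenever $r > 0$, which forces $r = 0$, so $\newt{g} = \{a\}$ and $g$ itself is a monomial. Symmetrically, if $h_H$ is a monomial then $r = 1$ and $h$ is a monomial. In either case the factorization is trivial, and since $f$ is not a monomial (this is built into the definition of only homothetically reducible), $f$ is irreducible.

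The only step that is not pure bookkeeping is the Minkowski-cancellation upgrade from ``one summand is homothetic'' to ``both summands are homothetic with complementary ratios''; once that is in place, the case analysis on which facial polynomial becomes a monomial is immediate from $\dim F \geq 1$.
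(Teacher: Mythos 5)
Your proof is correct and follows essentially the same route as the paper's: both factors' Newton polytopes are homothetic to $\newt{f}$ (the paper folds your Minkowski-cancellation step into the ``and hence both'' clause of its definition of only homothetic decomposability), and restricting the factorization to the face $F$ forces one homothety ratio to vanish. Your version just makes explicit the bookkeeping (the translation $a$, $\dim F \geq 1$, complementary ratios) that the paper leaves implicit.
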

\begin{proof}
 Suppose that $f$ is only homothetically reducible and let $F$ be a face of $\newt{f}$ such that $f|_F$ is irreducible.  Suppose that $g,h$ are Laurent polynomials such that $f = gh$. As $f$ is only homothetically reducible, there exists $r,s \in \QQ$ such that  $r \newt{f} =\newt{g}$ and $t\newt{f} = \newt{h}$, and so for any face $F'$ of $\newt{f}$ we have that $r \newt{f|_{F'}} =\newt{g|_{F'}}$ and $t\newt{f|_{F'}} = \newt{h|_{F'}}$. Notice that $f|_F$ is irreducible and therefore, one of $g|_F$ or $h|_F$ is a monomial. As one of $h|_F$ or $g|_F$ must be a monomial (which by Remark~\ref{Rm:HomSimp} we can assume to be the constant monomial), either $t$ or $r$ is zero.
\end{proof}

\section{Expanded dispersion polynomials}\label{sec:2-structure}
Let $\Gamma$ be a $\ZZ^d$-periodic graph with fundamental domain $W$, let $L(z)$ be the Floquet matrix of a discrete periodic operator $L$ with a $\ZZ^d$-periodic labeling $(V,E)$ (that is, both $E$ and $V$ are $\ZZ^d$-periodic), and let $D(z,\lambda) \vcentcolon = \det(L(z)-\lambda I)$ be its dispersion polynomial. 

Fix $Q = (q_1,\dots, q_d) \in \NN^d$, and consider $\Gamma$ as a $Q\ZZ$-periodic graph with fundamental domain $W_Q$. For a $Q\ZZ$-periodic potential $V_{Q}$, let $L_Q(z)$ be the Floquet matrix of $L_{Q}$ acting on the $Q\ZZ$-periodic graph $\Gamma$ with fundamental domain $W_Q$ and with the labeling $(V_Q,E)$. Let $\hat{L}_Q(z)$ be the matrix obtained from the Floquet matrix $L_Q(z^{Q})$ after the change of basis~\eqref{eq:basis}, and let $\hat{V}$ be the matrix representing $V_Q$ after the change of basis in Section~\ref{coveringsubsec}. 

Recall that $D_Q(z,\lambda) = \det(L_Q(z) - \lambda I)$, $\hat{D}_Q(z,\lambda)= \det(\hat{L}_Q(z) - \lambda I)$, $D_Q(z^Q,\lambda) = \hat{D}_Q(z,\lambda)$, $\vert Q \vert := \prod_{i = 1}^d q_i$, and $\calU_Q \vcentcolon=$ $\prod_{i=1}^{d}\calU_{q_{i}}$, where $\calU_{q_{i}}$ is the multiplicative group of $q_{i}$th roots of unity. We will often write $D, D_Q,$ and $\hat{D}_Q$ in place of $D(z,\lambda), D_Q(z,\lambda)$ , and $\hat{D}_Q(z,\lambda)$ respectively.

We seek conditions on $D$ and $Q$ which imply that if $V_Q = V$ then $D_Q$ is irreducible. In this case, the $Q\ZZ$-periodic potential $V_Q$ is also $\ZZ^d$-periodic. 

 Suppose that $V_Q = V$. By Remark~\ref{rm:periodic}, $\hat{V}$ is given by a diagonal matrix when the potential $V_Q$ is $\ZZ^{d}$-periodic.  It follows that $\hat{D}_Q(z,\lambda)$ may be expressed in terms of $D(z,\lambda)$ as
\begin{equation}~\label{eq:expansion}
    \hat{D}_Q(z,\lambda) \vcentcolon = \det(\hat{L}_Q(z)- \lambda I) = \prod_{\mu \in \calU_Q} \det(L(\mu z) - \lambda I) = \prod_{\mu \in \calU_Q} D(\mu z, \lambda).
\end{equation}

Due to this expression, we have that $\newt{\hat{D}_Q} = {\lvert Q \rvert} \newt{D}$.
As $D_Q(z^Q,\lambda) = \hat{D}_Q(z,\lambda)$, $\newt{D_Q}$ is the polytope obtained after multiplying the $i$th coordinate of each point of $\newt{D}$ by $\frac{\lvert Q \rvert}{q_i}$. That is, $(a_1,\dots,a_{d},a_{d+1})$ is a vertex of $\newt{D}$ if and only if $(\frac{\lvert Q \rvert a_1}{q_1}, \dots,\frac{\lvert Q \rvert a_d}{q_d},$ $\lvert Q \rvert a_{d+1})$ is a vertex of $\newt{D_Q}$. Therefore, $w = (w_1, \dots, w_{d+1}) \in \ZZ^d$ exposes a face of $\newt{D}$ if and only if $w' = (q_1 w_1, \dots, q_d w_d,w_{d+1})$ exposes a face of $\newt{D_Q}$. We call $\newt{D_Q}$ a \demph{contracted $Q$-dilation} of $\newt{D}$ (a contracted $Q$-dilation is a $(\frac{|Q|}{q_1}, \dots, \frac{|Q|}{q_d}, |Q|)$-dilation in the sense of Section~\ref{SubSec:AEA}). We will often write $(D_Q)|_w$ for $(D_Q)|_{w'}$; similarly, if $F$ is the face of $\newt{D_Q}$ exposed by $w'$, then we will write $D|_F$ for the corresponding facial polynomial of $D$ and vice versa. 

The following lemma is an immediate consequence of ~\eqref{eq:expansion}. We include a proof for the reader's convenience. 

\begin{Lemma}\label{Lem:HomirRed}
Let $V_Q$ be the $\ZZ^d$-periodic potential $V$. Suppose that $D$ is only homothetically reducible, then $D_Q$ is only homothetically reducible.
\end{Lemma}
\begin{proof}
Suppose $D$ is only homothetically reducible and $D_Q = g(z,\lambda)h(z,\lambda)$, where $g(z,\lambda)$ is not a monomial.
As $D_Q(z^Q,\lambda) =\hat{D}_Q(z,\lambda)$, it suffices to show that $\newt{g(z^Q ,\lambda)}$ is homothetic to $\newt{\hat{D}_Q}$. 

By Remark~\ref{Rm:HomSimp}, as $D$ is only homothetically reducible, if $f_1,$ $\dots,f_l$ are its irreducible factors, then there exist $r_1,\dots,r_l \in \QQ$ such that $\newt{f_i}= r_i \newt{D}$. As $V_Q$ is $\ZZ^d$-periodic, it follows that $\newt{f_i}= \frac{r_i}{\lvert Q \rvert}  \newt{\hat{D}_Q}$.

By~\eqref{eq:expansion}, $g(z^Q,\lambda)h(z^Q,\lambda)$ $ = \hat{D}_Q =\prod_{\mu \in \calU_Q} D(\mu z, \lambda)$. 
Thus for some integer $s$, such that $0 < s \leq l \lvert Q \rvert$, $g(z^Q,\lambda) = \prod_{i=1}^s \kappa_i(z,\lambda)$, where each $\kappa_i(z,\lambda) = f_j(\mu z,\lambda)$ for some $j \in [l]$ and $\mu \in \calU_Q$ (noting that each $f_j(\mu z, \lambda)$ is an irreducible factor of $D(z^Q,\lambda)$). 
If $\kappa_i(z,\lambda) = f_j(\mu z,\lambda)$ then let $\chi_i = r_j$. 
We conclude that $\newt{g(z^Q,\lambda)} = \frac{\sum_{i = 1}^s \chi_i}{\lvert Q \rvert}  \newt{\hat{D}_Q}$, and thus we have $\newt{g(z,\lambda)} = \frac{\sum_{i = 1}^s \chi_i}{\lvert Q \rvert}  \newt{D_Q}$.
\end{proof}

\begin{Remark}\label{RM:HOMIRED}
Lemma~\ref{Lem:HomirRed} extends to facial polynomials and specializations (such as the specializations that define Fermi varieties).
That is, for a $\ZZ^d$-periodic potential, if $D|_w$ is only homothetically reducible, then so is $(D_Q)|_w$. Let $\lambda_0 \in \CC$. If $D(z,\lambda_0)$ is only homothetically reducible then so is $D_Q(z,\lambda_0)$. Finally, if $D|_w(z,\lambda_0)$ is only homothetically reducible, then so is $(D_Q)|_w(z,\lambda_0)$. Thus the results of this section extend to $D_Q(z,\lambda_0)$, as well as to $(D_Q)|_w(z,\lambda_0)$ and $(D_Q)|_w$. \hfill$\diamond$\medskip
\end{Remark}
The following lemma is considered folklore, and will provide us motivation. We include a proof for the reader's convenience. For $A \in \NN^d$, let $\demph{Q/A} := ( \frac{q_1}{a_1} , \dots, \frac{q_d}{a_d})$, and write $\demph{A\mid Q}$ if $a_i \mid q_i$ for all $i=1,\dots,d$.
\begin{Lemma}\label{Lem:Red}
 Suppose $A = (a_1,\dots,a_d) \in \NN^d$ such that  $A\mid Q$ and let $V_Q$ be an $A\ZZ$-periodic potential. If $D_Q$ is irreducible, then $D_A$ is irreducible.
\end{Lemma}
\begin{proof}
By way of contradiction, suppose that $D_A$ is reducible, that is, $D_A=f(z,\lambda) g(z,\lambda)$. The fundamental domain $W_Q$ is a $Q/A$-expansion of $W_A$, hence
\[
D_Q(z_1^{\frac{q_1}{a_1}}, \dots, z_d^{\frac{q_d}{a_d}}, \lambda) = \prod_{\mu \in \calU_{Q/A}} D_A (\mu  z, \lambda) = \prod_{\mu \in \calU_{Q/A}} f( \mu z, \lambda) g(\mu z, \lambda).
\]
By Lemma 3.1 of~\cite{FLM}, there exist $f'$ and $g'$ such that  \[f'(z_1^{\frac{q_1}{a_1}}, \dots, z_d^{\frac{q_d}{a_d}}, \lambda) = \prod_{\mu \in \calU_{Q/A}} f(\mu z , \lambda) \text{ \ \  and \ \ }g'(z_1^{\frac{q_1}{a_1}}, \dots, z_d^{\frac{q_d}{a_d}}, \lambda) = \prod_{\mu \in \calU_{Q/A}} g(\mu z , \lambda).\] Therefore $D_Q(z_1, \dots, z_d, \lambda) = f'(z_1, \dots, z_d, \lambda) g'(z_1, \dots, z_d, \lambda).$\end{proof}

By Lemma~\ref{Lem:Red}, if $D_Q$ is irreducible and $A\mid Q$, then $D_A$ is irreducible. For the remaining section, we assume $D$ is irreducible for the $\ZZ^d$-periodic potential $V$ and that $V_Q = V$. 
Let $\sigma  = \{ \sigma_1,\dots,\sigma_k \} \in \binom{[d]}{k}$ be a $k$-element subset of the set $[d] \vcentcolon = \{ 1, 2, \dots, d \}$. Let $\bar{\sigma} = [d] \smallsetminus \sigma$ be the complement of $\sigma$ in $[d]$. Define $\sigma \odot Q = (\sigma \odot q_1, \sigma \odot q_2, \dots, \sigma \odot q_d)$, where $\sigma \odot q_i = q_i$ if $i \in \sigma$, and $\sigma \odot q_j = 1$ if $j \not \in \sigma$. Let $D_{\sigma \odot Q}$ be the dispersion polynomial given by the discrete periodic operator $L$, with the $\ZZ^d$-periodic (and therefore $(\sigma \odot Q)\ZZ$-periodic) labeling $(V_Q,E)$ associated to the $(\sigma \odot Q)\ZZ$-periodic graph $\Gamma$ with fundamental domain given by the expansion $W_{\sigma \odot Q}$ of $W$.  This notation will allows us study irreducibility of the dispersion polynomial as we incrementally expand coordinate-wise from $D = D_{(1,\dots,1)}$ to $D_Q$. Lemma~\ref{Lem:Red} suggests this approach, as we know that if $D_Q$ is irreducible, then for any $k < d$, we must have that any $D_{\sigma  \odot Q}$ is irreducible for all $\sigma \in \binom{[d]}{k}$. Indeed, this leads us to the following theorem.
\medskip

\begin{Theorem}~\label{TH:1}
Fix a positive integer $k<d$ and suppose that $D_{\sigma \odot Q}$ is irreducible for all $\sigma \in \binom{[d]}{k}$. If no $k+1$ coordinates of $Q$ share a common factor, then $D_Q$ is irreducible.
\end{Theorem}
\begin{proof}
Assume that no $k+1$ coordinates of $Q$ share a common factor. Suppose there exist polynomials $g,h$, with $g$ not a monomial, such that
\[
D_Q = g(z,\lambda) h(z,\lambda).
\]
Reordering, if necessary, we may assume that $\sigma = [k]$. As $W_Q$ is an expansion of $W_{\sigma \odot Q}$,
\[
D_Q(z^{\bar{\sigma} \odot Q},\lambda)  = \prod_{\gamma \in \calU_{\bar{\sigma} \odot Q}} D_{\sigma \odot Q}(z_1,\dots, z_k, \gamma_1  z_{k+1}, \dots, \gamma_{d-k}  z_d, \lambda).
\]
As $D_{\sigma \odot Q}$ is irreducible, there exist $\gamma^1,\dots, \gamma^s \in  \calU_{\bar{\sigma} \odot Q}$ for some $s \geq 1$ such that
\[
g(z^{\bar{\sigma} \odot Q},\lambda) = \prod_{i = 1}^s D_{\sigma \odot Q}(z_1,\dots, z_k, \gamma^i_1 z_{k+1}, \dots, \gamma^i_{d-k} z_d, \lambda).
\]
Expand this so that 
\[
g(z^Q,\lambda) =\prod_{i = 1}^s \prod_{\mu \in \calU_Q} D(\mu_1 z_1,\dots,\mu_k z_k, \gamma^i_1  z_{k+1}, \dots, \gamma^i_{d-k} z_d, \lambda)
\]
can be written as a product of $S \vcentcolon = s \prod_{i =1}^k q_i$ irreducible polynomials. As $\sigma$ is arbitrary (that is, the same argument holds for any $\sigma \in \binom{[d]}{k}$ after reordering coordinates), the product $q_{\sigma_1} \cdots q_{\sigma_k}$ divides $S$ for all $\sigma \in \binom{[d]}{k}$.
By our assumption, no $k+1$ coordinates of $Q$ share a common factor. Therefore, if $p^a$ is a prime power that divides $|Q|$, there exists $\sigma \in \binom{[d]}{k}$ such that $p^a \mid q_{\sigma_1} \cdots q_{\sigma_k}$, and thus $p^a \mid S$. As $S$ is at most $|Q|$, it follows that $S = |Q|$, and so $h$ must be a monomial. 
\end{proof}

To apply Theorem~\ref{TH:1}, we need to find conditions that imply $D_{\sigma \odot Q}$ is irreducible for all $|\sigma|\geq 1$. Rather than depending strictly on $Q$, these conditions examine the reducibility of $D_Q$ in relation to the interplay between $Q$ and the support of $D$. We begin this discussion with a remark.
\medskip
\begin{Remark}~\label{rm:alpha} In what follows, we study how $D(z,\lambda)$ relates to $D(\mu z, \lambda)$ for $\mu \in \calU_Q$. In particular, we consider if there exists a $\mu \in \calU_Q$ such that $D(\mu z, \lambda)$ is given by $D(z,\lambda)$ up to multiplication by some constant. Notice that $D(z,\lambda)$ has a term that is constant as a polynomial in $z$, which is therefore invariant under the map $z \to \mu z$ (that is, $D(z,\lambda)$ necessarily has a term that is a constant or a power of $\lambda$). Therefore, we may always assume that if such a $\mu$ exists, then $D(\mu z, \lambda) = D(z,\lambda)$.
\hfill$\diamond$
\end{Remark}

Before stating these conditions in generality, we begin by building some intuition by studying the case when $d=1$. Suppose that $\sigma = \{ 1\}$, $z=z_1$, and that $q = q_1 > 1$. As $D(z,\lambda)$ is irreducible, we have that \[D
_{q}(z^{q}, \lambda) = \prod_{\mu \in \calU_{q}} D(\mu z, \lambda).
\]
If $D_{q}(z,\lambda) = g h$, where $g$ and $h$ are not monomials, then there exist $\mu_1, \dots, \mu_s \in \calU_{q}$, where $1 \leq s < q$, such that
\[
g(z^{q}, \lambda) = \prod_{i=1}^s D(\mu_i  z, \lambda).
\]
As $s < q$, there exists $\mu' \in \calU_{q}$ such that $\mu'\mu_1 \not \in \{ \mu_1, \dots, \mu_s \}$; indeed, such a $\mu'$ must exist otherwise $s = q$ and $D_{q}(z,\lambda)$ is irreducible as then $h$ must be a monomial. As multiplying $z$ by elements of $\calU_{q}$ does not change $g(z^{q},\lambda)$,  we have
\[
g(z^{q}, \lambda) = g((\mu' z)^{q}, \lambda) = \prod_{i=1}^s D(\mu' \mu_i  z, \lambda).
\]
As each $D(\mu z, \lambda)$ is irreducible, there is a $j \in \{1,\dots, s\}$ such that $D(\mu' \mu_1   z, \lambda) = D(\mu_j  z, \lambda)$ (see Remark \ref{rm:alpha}).

As $\mu'\mu_1 \neq \mu_j$, we have that $\demph{\hat{\mu}} = \mu' \mu_1 (\mu_j)^{-1}$ is not $1$, and thus we have a nontrivial element $\hat{\mu} \in \calU_{q}$ satisfying $D(\hat{\mu} z, \lambda) = D(z, \lambda)$. Since $D(\hat{\mu}  z, \lambda) = D(z, \lambda)$, if $\upsilon(z, \lambda)$ is a monomial term of $D(z,\lambda)$, then $\upsilon(\hat{\mu} z, \lambda) = \upsilon(z, \lambda)$. Thus if $D_{q}(z,\lambda)$ is reducible, then \demph{$\text{ord}(\hat{\mu})$}, the order of $\hat{\mu}$, must divide the exponent of $z$ in any term $\upsilon(z,\lambda)$ of $D(z,\lambda)$.

Let $b'$ be the greatest common divisor of the finite set of integers $\{r \mid (r,t) \in \calA(D(z,\lambda) \}$. As all terms of $D(z,\lambda)$ are invariant under the map $z \to \hat{\mu}z$, $\text{ord}(\hat{\mu})$ divides $b'$. As $\text{ord}(\hat{\mu})$ divides $q = |\calU_{q}|$, we see that $\gcd(q, b') \neq 1$. It follows that if $D(z,\lambda)$ is irreducible and $\gcd(q,b') =1$, then we have a contradiction and can conclude that $D_{q}(z,\lambda)$ is irreducible (as our assumption that $D_{q}(z,\lambda)$ is reducible implies that $\gcd(q,b') \neq 1$).

Indeed, if $\gcd(q, b') = 1$, then we have that $\gcd(\text{ord}(\hat{\mu}), b') =1$. By Euclid's algorithm and the definition of $b'$, there must exist $z^{r_1}\lambda^{t_1}, \dots, z^{r_l}\lambda^{t_l}$ as integer powers of monomials, or the reciprocals of monomials, that appear as a term with nonzero coefficient in $D(z,\lambda)$ with $\sum r_i = b'$. Therefore, as $\hat{\mu}^{b'} \neq 1$, we see that \begin{equation}~\label{eq:Divstuff}
    \hat{\mu}^{b'}  z^{r_1 + \dots + r_l} \lambda^{t_1 \dots t_l} \neq   z^{r_1 + \dots + r_l} \lambda^{t_1 \dots t_l},\end{equation} and so we cannot have $(\hat{\mu} z)^{r_i}\lambda^{t_i} = z^{r_i}\lambda^{t_i}$ for all $i \in [l]$. This contradicts the assumption that $D_{q}(z,\lambda)$ is reducible, as if it were then, as discussed, all terms of $D(z,\lambda)$ would be invariant under the map $z \to \hat{\mu} z$.

To state the more general case, we first need to introduce a definition that will allow us to identify the values $\text{ord}(\hat{\mu})$ can take for $D_{\sigma \odot Q}$ to be reducible.
\begin{Definition} Let $\sigma \in \binom{[n]}{k}$ for some $k \in [n]$ and let $j \in \sigma$. Let $B$ be the collection of $b$ such that there is a vector in the integer span of $\calA(D)$ that is $b$ in the $j$th coordinate and $0$ for every other coordinate $i \in \sigma$. $B$ forms an ideal of $\ZZ$ and is therefore principal. Define $\demph{\Span_{j, \sigma}(D)}$ to be a generator of $B$ (equivalently, the greatest common divisor of the elements in $B$).\hfill$\diamond$\medskip

\end{Definition}
 If $Q = q_1$, then $\Span_{1, \{1 \}}(D) = b'$ (where $b'$ is from the discussion of the one-dimensional case). In general, $D_{\sigma \odot Q}$ can factor only if $\text{ord}(\hat{\mu})$ divides $\Span_{1, \sigma}(D)$ (as otherwise the same situation as \eqref{eq:Divstuff} arises). We now present some examples.
 \medskip
\begin{Example}~\label{ex:27}
Consider the polynomial $f(z_1,z_2,\lambda) = z_1^2 z_2^2 + \lambda z_1^4 + \lambda^3$. Suppose there is a $\mu_1 \in \TT$ such that $f(\mu_1 z_1, z_2,\lambda) = c f(z_1, z_2,\lambda)$ for some $c \in \CC$. As every term must be invariant under $z_1 \to \mu_1 z_1$, $c = 1$ because $\lambda^3$ is invariant with respect to this change of variables. By definition, $\Span_{1,\{1\}}(f(z_1,z_2,\lambda))=2$. Thus $\mu_1^2 = 1$, that is $\mu_1 = \pm 1$. This agrees with the fact that $\mu_1^2z_1^2z_2^2 = z_1^2z_2^2$.

In this same case, $\Span_{1,\{1,2\}}(f(z_1,z_2,\lambda)) = 4$. Given $\mu_1$ and $\mu_2$ in $\TT$, where $f(\mu_1 z_1, \mu_2 z_2,\lambda)$ $=$ $  c f(z_1, z_2,\lambda)$, then $c = 1$. As $\lambda z_1^4$ is independent of $\mu_2$, the order of $\mu_1$ must divide $4$.

Finally consider $h(z_1,z_2,\lambda) = z_1^{-3} z_2^2 + z_1^2 z_2^{-1} + \lambda$. Assume $\mu_1$ and $\mu_2$ are in $\TT$ such that $h(\mu_1 z_1, \mu_2 z_2,\lambda) = c h(z_1, z_2,\lambda)$. Again, $c = 1$. We have $(z_1^{-3} z_2^2)(z_1^2z_2^{-1})^2 = z_1$. Therefore, we find that $\Span_{1,\{1,2\}}(h(z_1,z_2,\lambda)) = 1$. As $\mu_1^{-3} z_1^{-3} \mu_2^2z_2^2 =z_1^{-3} z_2^2$ and $\mu_1^2z_1^2\mu_2^{-1}z_2^{-1} = z_1^2z_2^{-1}$, it follows $z_1 = (\mu_1^{-3} z_1^{-3} \mu_2^2z_2^2)(\mu_1^2z_1^2\mu_2^{-1}z_2^{-1})^2 = \mu_1 z_1$; we conclude that $\mu_1 = 1$.
\hfill$\diamond$
\end{Example}

\begin{Remark}
If $\sigma' \subseteq \sigma$ then $\Span_{j,\sigma'}(D)$ divides $\Span_{j,\sigma}(D)$.
\hfill$\diamond$\medskip
\end{Remark}

We now state the general case. Recall Remark~\ref{rm:alpha}; that is, we assume that if there exists a $\mu \in \calU_Q$ such that $D(\mu z, \lambda) = c D(z,\lambda)$, then $c = 1$.

\begin{Lemma}\label{lem:coprime}
Let $V_Q$ be a $\ZZ^d$-periodic potential. Suppose that there exists $\sigma' \in \binom{[d]}{k-1}$, where $1 \leq k \leq d$, such that $D_{\sigma' \odot Q}$ is irreducible. Let $\sigma = i \cup \sigma'$ for some $i \not\in \sigma'$. If $q_i$ is coprime to $b = \Span_{i,\sigma}(D)$, then $D_{\sigma \odot Q}$ is irreducible. 

\end{Lemma}
\begin{proof}
After reordering we may assume that $i = 1$ and $\sigma = \{1,2, \dots, k\}$. 
By way of contradiction, suppose that $D_{\sigma \odot Q}$ is reducible with factor $g$ that is not a monomial, but $\gcd(q_1,b) = 1$. Let $\sigma' = \sigma\smallsetminus\{1\}$. Then
\[
D_{\sigma \odot Q}(z_{1}^{q_{1}},z_2,\dots, z_d,\lambda) = \prod_{\mu\in \calU_{q_{1}}} D_{\sigma' \odot Q}(\mu z_{1},z_2,\dots,z_d,\lambda).
\]
As each $D_{\sigma' \odot Q}(\mu z_{1},z_2,\dots,z_d,\lambda)$ is irreducible, $g$ must have the following factorization,
\[
g(z_{1}^{q_{1}},z_2,\dots,z_d, \lambda)=\prod_{i=1}^{s}D_{\sigma' \odot Q}(\mu_{i} z_{1},z_2,\dots,z_d,\lambda),
\]
where $\mu_i \in U_{q_1} $ and $1 \leq s < q_1$; that is, a nonempty proper subset of the irreducible factors of $D_{\sigma \odot Q}(z_{1}^{q_{1}},z_2,\dots, z_d,\lambda)$ must appear as the irreducible factors of $g(z_{1}^{q_{1}},z_2,\dots,z_d, \lambda)$. As $s < q_1$, there exists $\mu' \in \calU_{q_1}$ such that $\mu' \mu_1 = \hat{\mu} \not\in \{\mu_1, \mu_2, \dots , \mu_s\}$. Notice we have the following two factorizations, 
\begin{equation*}
\begin{split}
g(z_{1}^{q_{1}},\dots,z_k^{q_k},z_{k+1},\dots,z_d, \lambda) & = \prod_{i=1}^{s}\prod_{\gamma\in \calU_{(1, q_2,\dots,q_k)}}D(\mu_{i} z_{1},\gamma_{2} z_{2},\dots, \gamma_{k}z_{k},z_{k+1},\dots,z_d, \lambda),\\
g((\mu' z_{1})^{q_{1}},\dots,z_k^{q_k},z_{k+1},\dots,z_d, \lambda) & = \prod_{i=1}^{s}\prod_{\gamma \in \calU_{(1, q_2,\dots,q_k)}} D(\mu'\mu_{i} z_{1},\gamma_{2} z_{2},\dots, \gamma_{k} z_{k},z_{k+1},\dots,z_d, \lambda).
\end{split}
\end{equation*}

As each $D(\mu z,\lambda)$ is irreducible and \[g(z_{1}^{q_{1}},\dots,z_k^{q_k},z_{k+1},\dots,z_d, \lambda) = g((\mu' z_{1})^{q_{1}},\dots,z_k^{q_k},z_{k+1},\dots,z_d, \lambda),\] these two factorizations are the same. For $\hat{\mu}$ and a given $\gamma \in \calU_{(1, q_2,\dots,q_k)}$, there exists $\mu_l \in \{\mu_1, \mu_2, \dots \mu_s\}$ and $\gamma' \in \calU_{(1, q_2,\dots,q_k)}$ with
\[
D(\hat{\mu} z_{1},\gamma_{2} z_{2},\dots, \gamma_{k} z_{k},z_{k+1}, \dots, z_d, \lambda) =  D(\mu_l  z_{1},\gamma'_{2} z_{2},\dots, \gamma'_{k} z_{k},z_{k+1}, \dots, z_d, \lambda).
\]
Let $\tilde{\mu} = \mu_l (\hat{\mu})^{-1}$. Notice that, as $\hat{\mu} \not\in \{ \mu_1,\dots, \mu_s\}$, $\tilde{\mu} \neq 1$. In particular, $\text{ord}(\tilde{\mu})$ is an integer greater than $1$ that divides $q_1$. If $z_2, \dots, z_k$ do not appear in a monomial $\upsilon(z_1,z_{k+1},\dots z_d,\lambda)$ with support in the integral span of $\calA(D)$ then $\upsilon(\tilde{\mu} z_1,z_{k+1},\dots z_d,\lambda) = \upsilon(z_1,z_{k+1},\dots z_d,\lambda)$. Since $\gcd(q_1,b)=1$, by the definition of $\Span_{1,\{1,2,\dots, k\}}(D)$ ($= b$), there exists a term $\upsilon(z_1,z_{k+1},\dots z_d,\lambda)$ of $D$ that is not invariant under the map $z \to \tilde{\mu}z$, a contradiction.
\end{proof}
\medskip

\begin{Remark}~\label{rm:alpha2} 
Lemma~\ref{lem:coprime} can also apply to $D(z,\lambda_0)$, $D|_F$, and $D|_F(z,\lambda_0)$ (that is, the polynomials defining the Fermi varieties and facial polynomials); however, there may be $\mu \in \calU_Q$ such that $D(\mu z, \lambda_0) = c D(z,\lambda_0)$ for some $c \in \TT \smallsetminus \{ 1 \}$. In these cases, one can still obtain the assumption of Remark~\ref{rm:alpha} by only taking partial expansions of the fundamental domain. For example, given $\sigma \in \binom{[d]}{k}$, $D(z,\lambda)$ may have a term that is a constant as a polynomial in $\CC[z_{\sigma_1}, z_{\sigma_2},\dots, z_{\sigma_k}]$ (that is, a term in $\CC[z_{\bar{\sigma}_1}, z_{\bar{\sigma}_2},\dots, z_{\bar{\sigma}_{d-k}},\lambda]$). It follows that if $\mu \in \calU_{\sigma \odot Q}$ and $D(\mu z, \lambda_0) = c D(z,\lambda_0)$, then $c = 1$.
\hfill$\diamond$
\end{Remark}

\medskip
\begin{Remark}~\label{remark:past}
From the proof of Lemma~\ref{lem:coprime} we can recover a version of ~\cite[Lemma 3.4]{FLM}. In particular, suppose that for all $\mu \in \calU_Q$ one has $D(\mu z,\lambda) \neq D(z,\lambda)$ (using the assumption that $D$ has a term that is constant as a polynomial in $z$). This condition essentially encapsulates condition (A2) of ~\cite{FLM}, which is an assumption of ~\cite[Lemma 3.4]{FLM}. Under this condition, notice that if $g | D_Q$ and $g$ is not a monomial, then we must have that $D(\mu z, \lambda) | g(z^Q,\lambda)$ for every $\mu \in \calU_Q$; but then $g(z^Q,\lambda) = \prod_{\mu \in \calU_Q} D(\mu z, \lambda) = D_Q(z^Q,\lambda)$. Thus Lemma~\ref{lem:coprime} essentially gives us conditions for when (A2) holds when expanding the fundamental domain along a single coordinate axis (allowing us to apply the argument of ~\cite[Lemma 3.4]{FLM}). 

More generally, we have the following. Suppose that there exists $\sigma' \subsetneq \sigma$ such that $D_{\sigma' \odot Q}$ is irreducible and contains a constant term as a polynomial in $z$. Without loss of generality, let $\sigma = \{1,\dots, l\} \cup \sigma'$ where $\{1,\dots, l \} \subseteq \overline{\sigma'}$, and let $\calU_{Q'} = \calU_{\{1,\dots, l \} \odot Q}$. If \[D_{\sigma' \odot Q}(z_1,\dots, z_d, \lambda) \neq D_{\sigma' \odot Q}(\mu_1 z_1,\dots, \mu_l z_l, z_{l+1}, \dots, z_d, \lambda) \text{ for any }\mu \in \calU_{Q'},\] then $D_{\sigma \odot Q}$ is irreducible.

We avoid further discussions of this general criteria, as our goal is to present practically verifiable conditions on $D$ that enable us to conclude irreducibility for $D_{Q}$. \hfill $\diamond$
\end{Remark}

We will use the following corollary often in the examples of Section~\ref{SubSec:AEA}.

\begin{Corollary}\label{cor:coprime}
Let $V_Q$ be the $\ZZ^d$-periodic potential $V$, and suppose that $D$ is irreducible. If there exist terms $z_1^{a_1}, \dots, z_d^{a_d}$ with nonzero coefficients in $D$, then $D_{Q}$ is irreducible for all $Q$ such that $\gcd(q_i,a_i) = 1$ for all $i$. 
\end{Corollary}
\begin{proof}
    Notice that no matter our choice of $i$ and $\sigma \subseteq [d]$, we have that $\Span_{i,\sigma}(D) | a_i$ and thus $\gcd(q_i, \Span_{i,\sigma}(D)) = 1$. Starting with the fact that $D = D_{\{ \} \odot Q}$ is irreducible and then consecutively applying Lemma~\ref{lem:coprime}; we see that at each step, as $\gcd(q_i, \Span_{i,\sigma}(D)) = 1$, we have that $D_{\sigma \odot Q}$ is irreducible for each $\sigma \subseteq [d]$.
\end{proof}

We say that a facial polynomial $D_Q|_F$ is \demph{$\ZZ^d$-periodic} if there exists a $\ZZ^d$-periodic potential $V'$ corresponding to a dispersion polynomial $D_Q'$ such that $D_Q|_F = (D_Q')|_F$. 

Suppose that for a $Q\ZZ$-periodic potential $V_Q$ the facial polynomial $D_Q|_F$ is $\ZZ^d$-periodic due to the existence of a $\ZZ^d$-periodic $V'$. By Remark ~\ref{RM:HOMIRED}, if $D|_F$ is only homothetically reducible for $V'$, then $D_Q|_F$ is only homothetically reducible. By Theorem~\ref{thm:homothetic}, we obtain the following corollary.

\begin{Corollary}\label{cor:ZdPeriodic}
Suppose that for every facet $F$ of $\newt{D_Q}$, except possibly one, $D_Q|_F$ is $\ZZ^d$-periodic via the existence of a $\ZZ^d$-periodic potential $V_F$. If, for each $F$, $D|_F$ is only homothetically reducible for $V_F$, then $D_Q$ is only homothetically reducible.
\end{Corollary}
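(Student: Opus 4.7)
The plan is to combine Remark~\ref{RM:HOMIRED} with Theorem~\ref{thm:homothetic}. First, for every facet $F$ of $\newt{D_Q(z,\lambda)}$ other than a possible distinguished facet $F_0$, the hypothesis provides a $\ZZ^d$-periodic potential $V_F$ whose dispersion polynomial $D'_Q$ satisfies $(D_Q)_F(z,\lambda) = (D'_Q)_F(z,\lambda)$, together with the fact that $D_F(z,\lambda)$ (computed with $V_F$) is only homothetically reducible. The facial-polynomial version of Lemma~\ref{Lem:HomirRed} recorded in Remark~\ref{RM:HOMIRED} then upgrades this to the statement that $(D_Q)_F(z,\lambda)$ itself is only homothetically reducible. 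Hence every facet of $\newt{D_Q(z,\lambda)}$ except possibly $F_0$ has an only homothetically reducible facial polynomial.

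Next I would apply Theorem~\ref{thm:homothetic}, taking the faces in the strong chains to be facets of $\newt{D_Q(z,\lambda)}$ different from $F_0$. Since $D_Q(z,\lambda) \in \CC[z_1^\pm,\dots,z_d^\pm,\lambda]$, its Newton polytope is of dimension at most $d{+}1$; assuming it is at least three-dimensional (so $d \ge 2$; the remaining low-dimensional cases are handled directly), any two facets meeting in a ridge meet in a face of dimension $d{-}1 \ge 1$, so the strong-chain condition $\dim(F_i \cap F_{i+1}) \ge 1$ is automatic between adjacent facets. Thus the only remaining task is to produce, for every pair $(a,b)$ of distinct vertices of $\newt{D_Q(z,\lambda)}$, a sequence of adjacent facets $a \in F_1, F_2, \dots, F_n \ni b$ none of which is $F_0$.

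The main obstacle is this purely combinatorial routing claim, which rests on two standard polytope facts. First, every vertex of an $n$-polytope with $n \ge 3$ is contained in at least $n \ge 3$ facets, so we can always choose $F_1 \ni a$ and $F_n \ni b$ distinct from $F_0$. Second, the facet-ridge graph of a polytope of dimension $n \ge 3$ remains connected after removing a single vertex, because $\partial P \setminus \mathrm{int}(F_0)$ is homeomorphic to a closed $(n{-}1)$-disk and the dual adjacency graph of the facets covering this disk is therefore connected. Concatenating the initial facet $F_1$, a $F_0$-avoiding path of ridge-adjacent facets, and the terminal facet $F_n$ produces the required strong chain. Feeding these chains into Theorem~\ref{thm:homothetic} yields that $D_Q(z,\lambda)$ is only homothetically reducible.
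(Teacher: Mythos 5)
Your proposal matches the paper's own (very terse) argument: the paper likewise invokes Remark~\ref{RM:HOMIRED} to upgrade the only-homothetic-reducibility of each $D_F(z,\lambda)$ for $V_F$ to that of $(D_Q)_F(z,\lambda)$, and then cites Theorem~\ref{thm:homothetic} directly. Your explicit justification of the combinatorial routing step --- that the facet--ridge graph stays connected after deleting the one exceptional facet, so strong chains of facets avoiding it exist between any two vertices --- is a correct filling-in of a detail the paper leaves implicit.
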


\section{Applications}\label{sec:4-applications} ~\label{SubSec:AEA}

We conclude with examples of discrete periodic operators associated to various families of periodic graphs which have irreducible Bloch varieties and irreducible Fermi varieties. We assume that all edge labels are nonzero. We begin by introducing useful definitions and notation to be used in the examples to come.

For a Laurent polynomial $f$, a facial polynomial $f|_F$ for a face $F$ of $\newt{f}$ is \demph{potential-independent} if the potential, treated as a finite vector of indeterminates, does not appear in the coefficients of $f|_F$. If a facial polynomial $D_Q|_F$ is potential-independent, then $D_Q|_F$ is $\ZZ^d$-periodic via the zero potential. A face $F$ of $\newt{D}$ (and its facial polynomial $D|_F$) is \demph{apical} if $F$ contains the apex of $\newt{D}$,  $(0,\dots, 0, m)$. 

Let $S_{m}$ be the symmetric group on $m$ elements, \demph{$L(z,\lambda)$} $\vcentcolon=$ $L(z) - \lambda I$, and let $L_{i,j}(z,\lambda)$ be the $(i,j)$ entry of $L(z,\lambda)$. 
For $w \in \ZZ^{d+1}$, we call $w \cdot (a,l)$ the \demph{weight} of the term $z^a \lambda^l$ with respect to $w$. 
The monomial terms in \demph{$\tau L(z,\lambda)$} $\vcentcolon = \prod_{i=1}^{m} L_{i,\tau(i)}(z,\lambda)$ are said to be terms \demph{produced} by $\tau$. Notice that in this way, $D(z,\lambda) =  \sum_{\tau \in S_m} \text{sgn}(\tau) \tau L(z,\lambda) $. We say a permutation $\tau \in S_m$ \demph{contributes} to terms of $D|_w$ if $\calA(\tau L(z,\lambda)) \cap \calA(D|_w)\neq \emptyset$.  We say $\tau$ is \demph{nonzero} if $\tau L(z,\lambda) \neq 0$. \medskip

\subsection{Bloch Varieties}~\label{subsec:BlochV}
 We say a $\ZZ^d$-periodic graph is a \demph{1-vertex graph} if it has a single vertex orbit with respect to its $\ZZ^d$-action. For $d\geq 1$, let $\Gamma$ be a $1$-vertex $\ZZ^d$-periodic graph.  Let $L$ be a discrete periodic operator associated to $\Gamma$. By \cite{Gao}, $D$ is irreducible as $\newt{D}$ is a pyramid of height $1$. Any apical facet of $\newt{D}$ is also a pyramid of height $1$ and thus has an irreducible facial polynomial. 
\begin{figure}[ht]
    \centering
       \begin{subfigure}{0.35\textwidth}
    \includegraphics[scale=1.4]{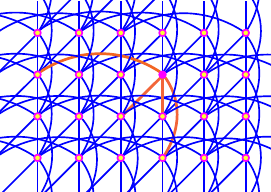}
     \end{subfigure}
     \hspace{1in}
      \begin{subfigure}{0.35\textwidth}
    \includegraphics[scale=1.4]{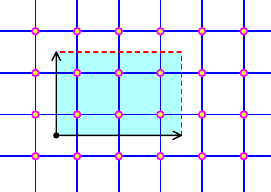}
     \end{subfigure}

    \caption{Two $2$-dimensional $1$-vertex graphs. On the left, the
     orange edges are representatives of the edge orbits. On the right, the square lattice with a highlighted $(3,2)$-expansion of the fundamental domain is depicted.}
       \label{fig:grid}
\end{figure}
Suppose $Q \in \NN^d$ and let $V$ be a $Q\ZZ$-periodic potential. For any nonbase face, as $\hat{L}_Q(z,\lambda)$ is only nonconstant along the main diagonal, it has only one contributing permutation (namely, the identity permutation) with each entry contributing terms of the same negative weight. 
We conclude that for every nonbase face $F$, the facial polynomial $\hat{D}_Q|_F$, and therefore  $D_Q|_F$, is potential-independent. As the faces are potential-independent (see the discussion before Lemma~\ref{Lem:HomirRed}), $\newt{D_Q}$ is a contracted $Q$-dilation of $\newt{D}$, and therefore a pyramid. Therefore, $D_Q$ is only homothetically reducible. To conclude that $D_Q$ is irreducible, we must show that one facial polynomial is irreducible.
\medskip
\begin{Example}~\label{1vBloch1}
Consider a $\ZZ^d$-periodic $1$-vertex graph $\Gamma$, where $d \geq 1$, such that $D$ has a facial polynomial $D|_F$ with the extreme monomials $z_1^{a_1}, z_2^{a_2}, \dots, z_d^{a_d}$. Two  examples of $1$-vertex graphs with this property are shown in Figure~\ref{fig:grid}.  If $q_i$ is coprime to $a_i$ for each $i$, then by Corollary~\ref{cor:coprime} $D_Q|_F$ is irreducible. Thus, by Corollary~\ref{cor:irred}, it follows that $D_Q(z,\lambda)$ is irreducible for all potentials. 

For example, consider the left-hand graph of Figure~\ref{fig:grid}. We have that $\newt{D}$ has a face with the extreme monomials $z_1^{3}$ and $z_2^2$; thus if $q_1$ is coprime to $3$ and $q_2$ is coprime to $2$ then $D_Q(z,\lambda)$ is irreducible.
\hfill$\diamond$\medskip
\end{Example}

\begin{Example}~\label{1vBloch2}
Let $d \geq 1$. Take any $1$-vertex $\ZZ^d$-periodic graph with at least one edge. Pick an apical facet $F$ of $\newt{D}$. Notice that there must be some monomial $z^a$ occurring as a term of $D|_F$ with a nonzero coefficient, for some $a~(\neq \bfz) \in \ZZ^d$.  Due to this, the collection of $Q = (q_1,\dots, q_d) \in \NN^d$ such that $D_{\{i\} \odot Q}$ is irreducible for all $i \in [d]$ is infinite. In particular, this set contains the $Q \in \NN^d$ such that $\gcd(a_i,q_i) = 1$; as $\Span_{i,\{i\}}(D|_F)$ must divide $a_i$, we have that $D_{\{i\} \odot Q}$ is irreducible by Lemma~\ref{lem:coprime}. Moreover, we consider the infinite set of $Q \in \NN^d$ such that $\gcd(a_i,q_i) = 1$ and the coordinates of $Q$ are pairwise coprime. Given a $Q$ in this infinite subset, we see that $D_Q|_F$ is irreducible by Theorem~\ref{TH:1}. Thus $D_Q$ is irreducible for all potentials.
\hfill$\diamond$
\end{Example}

Let us briefly compare our methods and results with those of ~\cite{FLM}.
\medskip
\begin{Remark}~\label{rm:compare}
The results of these last two examples overlap with the results and methods of ~\cite{FLM}.  In particular, if $F$ is a facet that is not the base, then $D_Q$ is irreducible if $D|_F(\mu z, \lambda) \neq D|_F(z,\lambda)$ for all $\mu \in \calU_Q$ (this is what they refer to as condition (A2), see Remark~\ref{remark:past}). In ~\cite{FLM} $1$-vertex graphs were considered, and thus checking whether (A2) is satisfied is sufficient to conclude irreducibility of $D_Q$; as this condition implies irreducibility of the facial polynomial $D_Q|_F$ and only homothetic reducibility immediately follows from the fact that the Newton polytope are pyramids (this is essentially \cite[Lemma 3.6]{FLM}). 

A distinct difference between these methods is that we only require that any facial polynomial be irreducible, whereas in \cite{FLM} they always fix the face given by $w = (1,\dots, 1, -1)$. For example, in \cite{FLM} it is concluded that the dispersion polynomial obtained from the Schr\"odinger operator associated to the Harper lattice is irreducible for all $(q_1,q_2)\ZZ$-periodic potentials when $q_1$ and $q_2$ are coprime, but choosing another facial polynomial (for example, corresponding to $w = (-1,0,-1)$) reveals that $q_1$ and $q_2$ do not need to be coprime.  \hfill $\diamond$
\end{Remark}
\medskip
\begin{Example}~\label{GrapheneBloch}
The $d$th member of the \demph{hexagonal-diamond family} is a $\ZZ^d$-periodic graph $\Gamma$ with fundamental domain $W$ given by two vertices $u$ and $v$ and edges $(u,v), (u,v-\varepsilon_i)$, and $(\varepsilon_i{+}u,v)$, where $\varepsilon_{i}$ is the $i$th standard basis vector of $\RR^{d}$. Due to periodicity, $E_{(u,v-\varepsilon_i)} = E_{(\varepsilon_i{+}u,v)}$. The $d=2$ and $d=3$ members are shown in Figure~\ref{fig:graphdiamond}.
Let $\gamma_i$ and $\alpha$ be $\ZZ^d$-periodic edge labels of $\Gamma$. For a $\ZZ^d$-periodic potential, a discrete periodic operator associated to $\Gamma$ has the Floquet matrix  
\[
L(z,\lambda) =\begin{pmatrix} \alpha + \sum_{i = 1}^d \gamma_i + V(u)-\lambda & -\alpha-\gamma_1 z_{1}^{-1}-\dots -\gamma_d z_d^{-1} \\ -\alpha-\gamma_1 z_{1}-\dots -\gamma_d z_d & \alpha + \sum_{i = 1}^d \gamma_i + V(v) - \lambda\end{pmatrix}. 
\]
\begin{figure}         \centering
       \begin{subfigure}{0.35\textwidth}
    \includegraphics[scale=0.32]{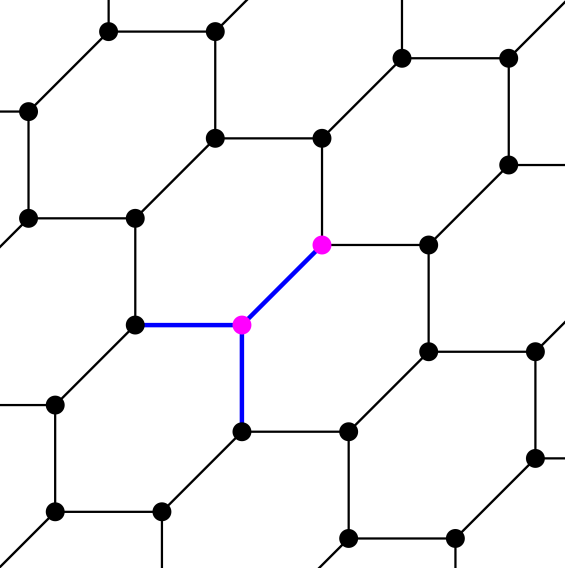}
     \end{subfigure}
     \hspace{1in}
      \begin{subfigure}{0.35\textwidth}
    \includegraphics[scale=0.2]{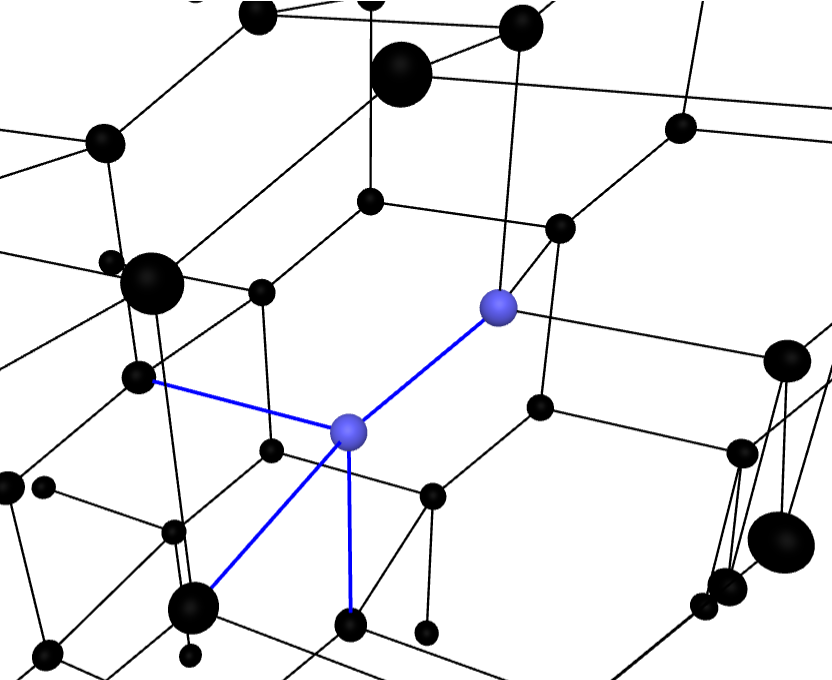}
     \end{subfigure}\caption{Local realizations of the hexagonal (left) and the diamond lattice (right). The purple vertices and blue edges are representatives of vertex and edge orbits of the graphs, respectively.}
    \label{fig:graphdiamond}
\end{figure}

The apical facial polynomials of $D$ are potential-independent and irreducible. Let $Q\in \NN^{d}$. We show that the apical facial polynomials of $\hat{D}_Q$, and thus $D_Q$, are potential-independent. Consider $\hat{L}_Q(z,\lambda)$ for a $Q\ZZ$-periodic potential $V$ $\vcentcolon$
\[
\hat{L}_Q(z,\lambda) = \begin{pmatrix}
    L(\mu_{1} z,\lambda) & \begin{matrix} \hat{V}_{12,{1}} & 0\\ 0 & \hat{V}_{12,2} \end{matrix} & \cdots & \begin{matrix} \hat{V}_{1\lvert Q\rvert,{1}} & 0\\ 0 & \hat{V}_{1\lvert Q\rvert,{2}} \end{matrix} \\ 
    \begin{matrix} \hat{V}_{21,1} & 0\\ 0 & \hat{V}_{21,2} \end{matrix} & L(\mu_{2} z,\lambda) & \cdots & \vdots \\
    \vdots &  & \ddots & \\
    \begin{matrix} \hat{V}_{\lvert Q\rvert1,{1}} & 0\\ 0 & \hat{V}_{\lvert Q\rvert1,{2}} \end{matrix} & \cdots & & L(\mu_{\lvert Q\rvert} z,\lambda)\\
\end{pmatrix},
\]
where $\hat{V}_{ij,k} = (\hat{V}_{\mu_i,\mu_j})_{k,k}$ and $L(z,\lambda)$ has $\tilde{V}(u) = \frac{1}{\lvert Q \rvert} \sum_{\omega | \omega + u \in W_Q} V(\omega + u)$ as its $\ZZ^d$-periodic potential. Viewing a nonzero permutation $\tau$ as a collection of paths (or directed cycles) through the matrix, we say that $\tau$ \demph{leaves} the main block-diagonal if there exists an $i \in [2Q]$ such that $(i,\tau(i))$ is an entry not belonging to the block-diagonal, i.e. for some $i$ we have that 
\medskip

\centerline{$(i,\tau(i)) \not\in \{(2n-1,2n-1),(2n-1,2n),(2n,2n-1),(2n,2n) \mid 1 \leq n \leq \lvert Q \rvert \}.
$}
\medskip
Without loss of generality, let $F$ be the apical facet exposed by $(-2,\dots,-2,-1)$. Every diagonal $L(\mu_k z, \lambda)$ can contribute at most $\lambda^2$ or $z_i$ to a term of $\tau\hat{L}_Q(z,\lambda)$, and so for $\tau$ to contribute to $\hat{D}_{Q}|_F$, every $L(\mu_k z, \lambda)$ must contribute either $\lambda^2$ or $z_i$, but not both. If $\tau$ leaves the main block-diagonal, then for some $k \in [|Q|]$ the permutation of $\tau$ cannot involve entries in the bottom row or left-hand column of $L(\mu_{k} z, \lambda)$; but then $L(\mu_{k} z, \lambda)$ cannot contribute either $\lambda^2$ or $z_i$ to $\tau\hat{L}_Q(z,\lambda)$, and so we must have that $\tau$ does not contribute to $\hat{D}_{Q}|_F$. It follows that $\hat{D}_{Q}|_F = \prod_{i=1}^{\lvert Q \rvert} \det(L(\mu_i z,\lambda))|_F$.

Thus $D_Q$ has only potential-independent, and thus only homothetically reducible, apical facial polynomials. As $D|_F$ is irreducible and has extreme monomials $z_1, z_2, \dots , z_d ,$ and $\lambda$; it follows from Corollary~\ref{cor:coprime} that $D_Q|_F$ is irreducible. Thus, by Corollary~\ref{cor:irred}, $D_Q$ is irreducible.
\hfill$\diamond$
\end{Example}

A $\ZZ^d$-periodic graph $\Gamma$ is \demph{dense} if there is a fundamental domain $W$ of $\Gamma$ such that whenever $a \in \calA(W) \neq \emptyset$, the union of $W$ and $W+a$ induces a complete graph. 

\medskip

\begin{Example}~\label{ex:DenseBloch} Consider a $\ZZ^d$-periodic dense graph $\Gamma$. As $\Gamma$ is dense, by \cite[Lemma 4.3]{FS}, $\newt{D}$ and its apical facets are pyramids for a generic labeling. For $Q \in \NN^d$, it is straightforward to deduce that any nonbase facial polynomial $D_Q$ is potential-independent and thus only homothetically reducible (such as in Examples~\ref{1vBloch1} and~\ref{1vBloch2}); in fact, any connected $1$-vertex graph is dense). By Theorem~\ref{thm:homothetic}, $D_Q$ is only homothetically reducible. To show $D_Q$ is irreducible for all potentials, it suffices to show that $D_Q|_F$ is irreducible for some face $F$.

In dimensions $2$ and $3$, we have already proven that each $D|_F$ is irreducible when $F$ is not a vertex. By ~\cite[Theorem~4.2]{FS}, for a generically labeled dense $\ZZ^2$- or $\ZZ^3$-periodic graph, the zero-set of $D|_F$ is smooth (and $D|_F$ is square-free) for any nonbase face $F$. In particular, this implies that for every facet $F$, $D|_F$ is irreducible (see~\cite{GKZ}). It follows that $D|_F$ is irreducible for any nonbase face $F$.  As the nonbase facial polynomials are potential-independent, it follows that $D_Q$ is irreducible for all potentials for infinitely many choices of $Q$ (as in Example~\ref{1vBloch2}).
\begin{figure}[ht]
    \centering
      \centering
     \begin{subfigure}{0.35\textwidth}
         \centering
    \includegraphics[scale=1]{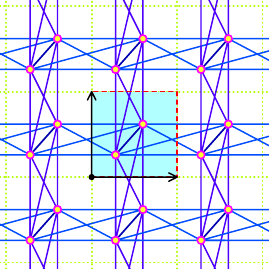}
     \end{subfigure}
     \hspace{1in}
      \begin{subfigure}{0.35\textwidth}
    \includegraphics[scale=1.15]{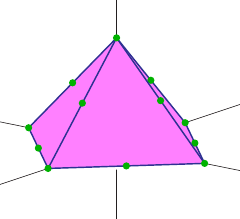}
     \end{subfigure}
    \caption{A $\ZZ^2$-periodic dense graph and corresponding Newton polytope.}
    \label{fig:densegraphpolytope}
\end{figure}

Consider the $\ZZ^2$-periodic dense graph from~\cite{DKS} shown in Figure~\ref{fig:densegraphpolytope}.
The Floquet matrix of the discrete periodic operator has entries:
\[
\begin{split}
L_{1,1}(z,\lambda) & = \alpha +  \beta_1 (2-z_1 - z_1^{-1}) + \beta_2 +\beta_3 + \gamma_1 (2 - z_2 - z_2^{-1}) + \gamma_2 + \gamma_3 + V_1 - \lambda \\
L_{1,2}(z,\lambda) & = -\alpha - \beta_2 z_1 - \beta_3 z_1^{-1} - \gamma_2 z_2 - \gamma_3 z_2^{-1}\\
L_{2,1}(z,\lambda) & = -\alpha - \beta_2 z_1^{-1} - \beta_3 z_1 - \gamma_2 z_2^{-1} - \gamma_3 z_2 \\
L_{2,2}(z,\lambda) & = \alpha +  \beta_4 (2-z_1 - z_1^{-1}) + \beta_2 +\beta_3 + \gamma_4 (2 - z_2 - z_2^{-1}) + \gamma_2 + \gamma_3 + V_2 - \lambda.\\
\end{split}
\] 
Where $\alpha, \beta_i, \gamma_j$ are edge labels. Let $F$ be the facet of $\newt{D}$ exposed by $(-1,-1,-1)$, then $D|_F$ has exactly the monomials terms $\lambda^2,z_1^2,z_2^2,\lambda z_1, \lambda z_2$, and $z_1z_2$.  By~\cite[Theorem~4.2]{FS}, $D|_F$ is irreducible. As $\lambda z_1$, $\lambda z_2$, and $\lambda^2$ are terms of $D|_F$, $\Span_{1,\sigma}(D|_F)$ and $\Span_{2,\sigma}(D|_F)$ both equal $1$ for $\sigma = \{1,2\}$. Thus by Corollary~\ref{cor:coprime}, $D_Q|_F$ is irreducible for any choice of $q_1$ and $q_2$. By Corollary~\ref{cor:irred}, $D_Q$ is irreducible for all potentials.
\hfill$\diamond$\medskip
\end{Example}

\subsection{Fermi Varieties}~\label{subsec:Fermi}
The $d$-dimensional \demph{cross-polytope}, $\demph{CP_d}$, is the polytope with vertices given by the $2d$ $d$-dimensional vectors that are either $1$ or $-1$ in one coordinate, and $0$ elsewhere. Letting $A = (a_1,\dots, a_d) \in \NN^d$, we define $\psi_A \vcentcolon \RR^d \to \RR^d$ to be the linear map given by $(v_1, v_2, \dots, v_d) \mapsto (a_1 v_1, a_2 v_2, \dots, a_d v_d)$. A $d$-dimensional \demph{$A$-dilated cross-polytope} is the image $\psi_A(CP_d)$. The image of a polytope $P$ under the map $\psi_A$ is called the \demph{A-dilation} of $P$. 

By Remarks~\ref{RM:HOMIRED} and ~\ref{rm:alpha2}, we may use our methods to discuss the irreducibility of Fermi varieties for $\ZZ^d$-periodic graphs when $d >2$. We note that when $d=2$, the Newton polytope of the polynomial defining a Fermi variety is $2$-dimensional, and thus we cannot apply the theory of only homothetic reducible polynomials that was developed in Section~\ref{Sec:OHD2OHR} (as the faces in a strong chain must share at least an edge). As with Bloch varieties, we begin with $1$-vertex graphs.
\medskip
\begin{Example}~\label{1vFermi}
Let $d$ be some integer greater than $2$. Fix $\lambda_0 \in \CC$. Consider a $1$-vertex $\ZZ^d$-periodic graph with fundamental domain $W$, such that the convex hull of $\calA(W)$ is an $A$-dilated cross-polytope for some $A=(a_{1},\dots,a_{d}) \in \NN^d$ such that $\gcd(a_1,\dots, a_d) = 1$. For example, the $3$-dimensional square lattice is a $1$-vertex $\ZZ^3$-periodic graph, and the convex hull of its support, $\calA(W)$, is the cross-polytope. Another example (although $d < 3$) is the left-hand graph of Figure~\ref{fig:grid}, the convex hull of its support, $\calA(W)$, is the $(3,2)$-dilated cross-polytope.

As $1$-vertex graphs are dense periodic graphs, by \cite[Lemma 4.3]{FS}, we have that $\newt{D(z,\lambda_0)} = \text{conv}(\calA(W))$. It follows that any facet of $\newt{D(z,\lambda_0)}$ is a pyramid with apex $(a_1,0, \dots, 0)$ or $(-a_1,0 \dots, 0)$ and thus is only homothetically decomposable. As $d>2$, we have that any two vertices can be connected by a strong chain of only homothetically decomposable facets, and thus $\newt{D(z,\lambda_0)}$ is only homothetically decomposable. By arguments of \cite{Gao}, as $\gcd(a_1,\dots, a_d) = 1$, if $F$ is a facet then $D|_F(z,\lambda_0)$ is irreducible. Let $Q \in \NN^{d}$ be such that $\gcd(q_i,a_i)=1$ for each $i$ and $\gcd(q_1,\dots,q_d)=1$. By the arguments given in Section~\ref{subsec:BlochV} on $1$-vertex graphs, the facial polynomials of $D_Q(z,\lambda_0)$ are independent of the potential and of $\lambda_0$.

Let $F$ be a facet of $\newt{D_Q(z,\lambda_0)}$. As $\gcd(q_i,a_i)=1$ for each $i$, $D_{\sigma \odot Q}|_F(z,\lambda_0)$ is irreducible for each $\sigma \in \binom{[d]}{d-1}$  by Lemma~\ref{lem:coprime} (here, a power of $z_i$, for $i \in \bar{\sigma}$, acts as the constant mentioned in Remark~\ref{rm:alpha2}). Since $\gcd(q_1,\dots,q_d)=1$, $D_Q|_F(z,\lambda_0)$ is irreducible by Theorem~\ref{TH:1}. By Corollary~\ref{cor:irred}, $D_Q(z,\lambda_0)$ is irreducible for all potentials.
\hfill$\diamond$\medskip
\end{Example}

\begin{Example}~\label{DFermi}
Let $d$ be an integer greater than $2$ and let $\lambda_0\in \CC$. Consider a $\ZZ^d$-periodic dense graph $\Gamma$ with generic edge labels. As in Example~\ref{1vFermi}, each facial polynomial of $D_Q(z,\lambda_0)$ is independent of the potential and of $\lambda_0$. For a given edge label, if one can show all facial polynomials of $D(z,\lambda_0)$ are only homothetically reducible, then all facial polynomials of $D_Q(z,\lambda_0)$ are only homothetically reducible for any $Q \in \NN^d$. If there exists an irreducible facial polynomial $D|_F(z,\lambda_0)$, one can use Section~\ref{sec:2-structure} to find $Q \in \NN^d$ where $D_Q|_F(z,\lambda_0)$ is irreducible. Finally, we can apply Theorem~\ref{thm:homothetic} to conclude irreducibility of $D_Q(z,\lambda_0)$. As in Example~\ref{ex:DenseBloch}, by~\cite[Theorem~4.2]{FS} when $d=3$, $D|_F(z,\lambda_0)$ is irreducible. It follows that there are infinitely many $Q$ such that $D_Q|_F(z,\lambda_0)$ is irreducible.

Consider the 3-dimensional dense graph shown in Figure~\ref{fig:3ddensepolytope}. This is the $3$-dimensional analog of the dense graph from Example~\ref{ex:DenseBloch}. The associated discrete periodic operator $L$ has a Floquet matrix with entries:
\[
\begin{split} 
L_{1,1}(z,\lambda) & = \alpha +  \beta_1 (2-z_1 - z_1^{-1}) + \beta_2 +\beta_3 + \gamma_1 (2 - z_2 - z_2^{-1})\\
& \ \ \ \ \  + \gamma_2 + \gamma_3 + \epsilon_1(2 - z_3 - z_3^{-1}) +\epsilon_2 + \epsilon_3 + V_1 - \lambda \\
L_{1,2}(z,\lambda) & = -\alpha - \beta_2 z_1 - \beta_3 z_1^{-1} - \gamma_2 z_2 - \gamma_3 z_2^{-1} - \epsilon_2 z_3 - \epsilon_3 z_3^{-1} \\
\end{split}\]
\[
\begin{split} 
L_{2,1}(z,\lambda) & = -\alpha - \beta_2 z_1^{-1} - \beta_3 z_1 - \gamma_2 z_2^{-1} - \gamma_3 z_2 - \epsilon_2 z_3^{-1} - \epsilon_3 z_3  \\
L_{2,2}(z,\lambda) & = \alpha +  \beta_4 (2-z_1 - z_1^{-1}) + \beta_2 +\beta_3 + \gamma_4 (2 - z_2 - z_2^{-1})\\ 
& \ \ \ \ \ + \gamma_2 + \gamma_3 +\epsilon_4(2 - z_3 - z_3^{-1}) +\epsilon_2 + \epsilon_3 + V_2 - \lambda.\\
\end{split}
\] 
Here, $\alpha, \beta_i, \gamma_j$, and $\epsilon_k$ are edge labels. 
\begin{figure}[ht]
    \centering
         \hspace{0.4in}
         \begin{subfigure}{0.4\textwidth}
    \includegraphics[scale=0.17]{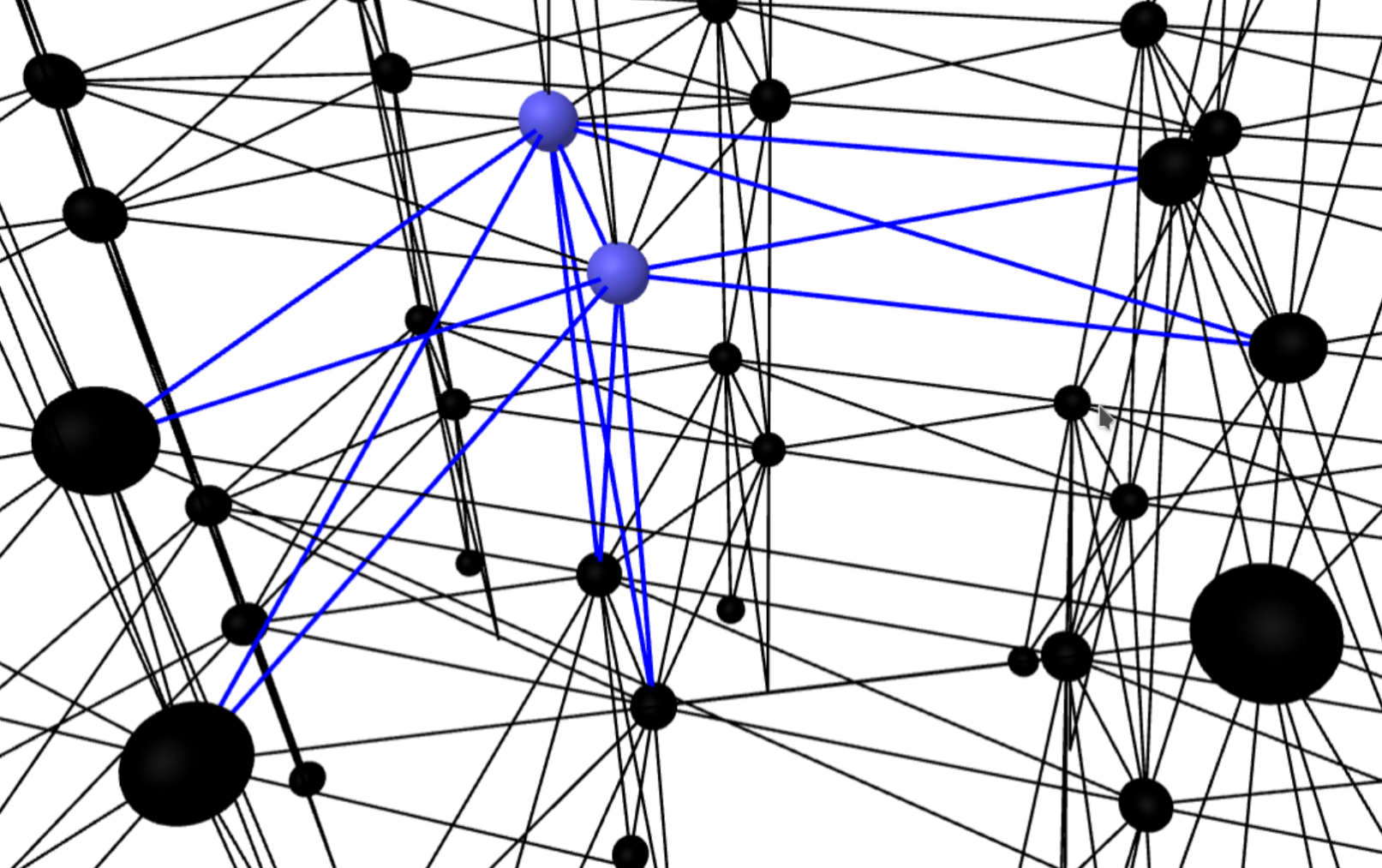}
     \end{subfigure}
     \hspace{1.3in}
      \begin{subfigure}{0.3\textwidth}
    \includegraphics[scale=1.1]{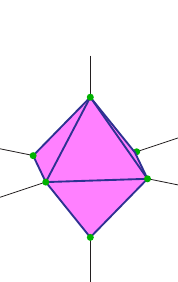}
     \end{subfigure}      
    \caption{A $3$-dimensional dense graph (left). The polytope $\newt{D(z,\lambda_0)}$ is a $3$-dimensional dilated cross-polytope (right).}
    \label{fig:3ddensepolytope}
\end{figure}

As shown in Figure~\ref{fig:3ddensepolytope}, $\newt{D(z,\lambda_0)}$ is a $(2,2,2)$-dilated cross-polytope, hence its facial polynomials are only homothetically reducible. Consider the face $F$ of $\newt{D(z,\lambda_0)}$ exposed by $w = (-1,-1,-1)$. By ~\cite[Theorem~4.2]{FS}, $D|_F(z,\lambda_0)$ is irreducible. By Lemma~\ref{lem:coprime}, $(D_{\sigma \odot Q})|_F(z,\lambda_0)$ is irreducible for each $\sigma \in \binom{[3]}{2}$ for all $Q \in \NN^d$ because $1 = \Span_{i,\sigma}(D|_F(z,\lambda_0))$ for each $i \in \sigma \in \binom{[3]}{2}$. Therefore, if $Q$ is chosen so that $\gcd(q_1,q_2,q_3) = 1$, then $D_Q|_F(z,\lambda_0)$ is irreducible by Theorem~\ref{TH:1}. By Corollary~\ref{cor:irred}, $D_Q(z,\lambda_0)$ is irreducible for all potentials. 
\hfill$\diamond$
\end{Example}

\section*{Acknowledgements}
We are grateful to Fillman, Kuchment, Liu, Matos, Shipman, and Sottile for their valuable feedback during the preparation and editing of this work. We also thank Sottile for providing some of the more visually appealing graphics. 
\def\cprime{$'$}

\bibliographystyle{amsplain}
\bibliography{final_draft}

\end{document}